\theoremstyle{definition}
\newtheorem{thm}{Theorem}
\newtheorem{lem}[thm]{Lemma}
\newtheorem{prop}[thm]{Proposition}
\newtheorem{cor}[thm]{Corollary}
\newtheorem{rem}[thm]{Remark}
\newtheorem{definition}[thm]{Definition}
\newtheorem{ex}[thm]{Example}
\newtheorem{convention}[thm]{Convention}
\crefname{thm}{Theorem}{Theorems}
\crefname{prop}{Proposition}{Propositions}
\crefname{cor}{Corollary}{Corollaries}
\crefname{lem}{Lemma}{Lemmas}
\newcommand{\N}{{\mathbb N}}
\newcommand{\Z}{{\mathbb Z}}
\DeclareMathOperator{\im}{Im}
\DeclareMathOperator{\Ker}{Ker}
\DeclareMathOperator{\Stab}{Stab}
\DeclareMathOperator{\Aut}{Aut}
\DeclareMathOperator{\Al}{Al}
\DeclareMathOperator{\Core}{Core}
\DeclareMathOperator{\As}{As}
\DeclareMathOperator{\Conj}{Conj}
\DeclareMathOperator{\Orb}{Orb}
\DeclareMathOperator{\Bij}{Bij}
\newcommand{\znz}[1]{\Z_{#1}}
\renewenvironment{abstract}
 {\par\noindent\textsc{\abstractname.}\ \ignorespaces}
 {\par\medskip}
\title[Structure groups and second homology groups of Alexander quandles]{Structure groups and second homology groups of linear Alexander quandles}
\author[Adrien Clément]{Adrien Clément}
\date{\today}
\keywords{Alexander quandle, structure group, quandle cohomology, cyclic group, Yang--Baxter equation.}
\subjclass[2020]{
	57K12,   	
	20B30,   	
	20F05,   	
	16S15,   	
	16T25.   	
	}
\address{Normandie Univ, UNICAEN, CNRS, LMNO, 14000 Caen, France}
\email{adrien.clement@unicaen.fr}
\begin{document}
\maketitle

\begin{abstract}
Quandles are self-distributive algebraic structures known as sources of strong knots invariants, but also appearing in other contexts.
From any quandle, one can construct two invariants: the structure group and the second quandle homology group.
These groups are useful in applications, but hard to compute.
In this paper, we focus on Alexander quandles over a cyclic group $\znz{n}$.
By using explicit rewriting techniques, we show that the structure group of such a quandle injects into $\Z^m \ltimes \znz{n}$ if $m$ is its number of orbits.
This allows us to compute its second quandle homology group, and find that the torsion part depends only on $m$ and $n$.
\end{abstract}

\section{Introduction}
Quandles were independently introduced in 1982 by Joyce  \cite{joyceClassifyingInvariantKnots1982} and Matveev \cite{matveevDistributiveGroupoidsKnot1982} as classifying invariants of knots.
A \emph{quandle} is a non-empty set $X$ equipped with a binary operation $\lhd: X\times X \to X$ that satisfies the following axioms for all $a,b,c\in X$:
\begin{enumerate}
\item idempotence: $a\lhd a = a$;
\item bijectivity of the right translation $\tau_c\colon X\to X, a\mapsto a\lhd c$;
\item self-distributivity: $(a\lhd b)\lhd c = (a\lhd c)\lhd (b\lhd c)$.
\end{enumerate}
These axioms encode the compatibility between colorings of knot diagrams and the three Reidemeister moves, so that quandles can provide coloring invariants of knots (see \cite{elhamdadiQuandlesIntroductionAlgebra2015,kamadaKnotInvariantsDerived2002} on quandles invariants).
Self-distributivity also plays a crucial role in the study of the Yang-Baxter equation \cite{brieskornAutomorphicSetsBraids1988} and the classification of Hopf algebras \cite{andruskiewitschRacksPointedHopf2003}.

Starting from a group, one may construct different quandle structures.

\begin{itemize}
\item \emph{Conjugation quandles} $\Conj(G)$ are groups $G$ with the conjugation operation
\[
g\lhd h = h^{-1} g h.
\]
\item \emph{Core quandles} $\Core(G)$ are groups $G$ with
\[
g\lhd h = h g^{-1} h.
\]
\item \emph{Alexander quandles} $\Al(M,T)$ are abelian groups $M$ equipped with a group automorphism $T$, with
\[
a\lhd b = Ta + (1-T)b.\] 
We say that $\Al(M,T)$ is \emph{linear} if $M=\znz{n}$ for an integer $n$.
\end{itemize}

A well-known example of an Alexander quandle is the Takasaki quandle (also called dihedral), where $M=\znz{n}$ and $T=-1$.

In the opposite direction, from a quandle $X$ one can construct several groups.
In this paper, we will focus on the \emph{structure group} (also called \emph{associated}, \emph{envelopping}, or \emph{adjoint group}) of a quandle $X$, which is defined as follows:
\[
\As(X) = \left< e_a, a\in X \mid e_a e_b = e_b e_{a\lhd b}, a,b\in X \right>.
\]

The structure group acts on $X$ by right translations:
\[a \cdot e_b := a\lhd b.\]

The orbits of $X$ under this action are simply called the \emph{orbits of $X$}.
A quandle $X$ is said to be \emph{connected} if the action of $\As(X)$ on $X$ is transitive.

The quadratic presentation of $\As(X)$ could suggest one to study the action of the braid group $B_k\curvearrowright X^k$, given by 
\[
\sigma_i\cdot (a_1,\ldots, a_k) = (a_1,\ldots, a_{i-1}, a_{i+1}, a_i\lhd a_{i+1}, a_{i+2}, \ldots, a_k),
\]
in order to get a normal form for elements of the structure group.
In practice, we don't know much about the image of $B_k \to \Bij(X^k)$, and so on the rewriting in the structure group.

The quandle (co)homology was introduced in 2003  \cite{carterQuandleCohomologyStatesum2003}, based on the rack homology developed in 1995 \cite{fennTrunksClassifyingSpaces1995}.
Most of the applications of quandles depend on effective computation of the \emph{second quandle homology group} $H_2^Q(X)$, and more precisely of its torsion.
The free part of $H_2^Q(X)$ is already known since 2003 for an arbitrary quandle \cite{etingofRackCohomology2003,litherlandBettiNumbersFinite2003}.
However, the torsion part is not known in most cases.
The following exceptions use quite different methods which are very difficult to generalize: odd order Takasaki quandle  \cite{niebrzydowskiSecondQuandleHomology2011}, connected quandles of order $p^2$ \cite{iglesiasExplicitDescriptionSecond2017}, connected Alexander quandle \cite{clauwensAdjointGroupAlexander2010,bakshiSchurMultipliersSecond2020}, finite abelian quandles \cite{lebedAbelianQuandlesQuandles2021}, conjugation quandles of $\overline{C}$-groups \cite{lebedConjugationGroupsStructure2024}.
The structure group provides group-theoretic tools to progress in computing $H_2^Q(X)$ through the following formula.
\begin{thm}[Eisermann \cite{eisermannQuandleCoveringsTheir2014}]
Let $X$ be a quandle.
Denote by $\Orb(X)$ the set of the orbits of $X$, and fix a family of representatives $\{a_{\mathcal{O}} \}_{\mathcal{O}\in \Orb(X)}$. Then
\begin{equation}\label{thm:eisermann}
H_2^Q(X) \cong \bigoplus_{\mathcal{O}\in \Orb(X)} \left[\Stab_{\As(X)}(a_\mathcal{O})\cap \Ker(\varepsilon) \right]_{Ab},
\end{equation}
where $\varepsilon\colon\As(X) \to \Z$ is the \emph{degree} morphism, sending each generator $e_a$ to $1$.
\end{thm}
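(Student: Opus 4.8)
The plan is to interpret the right-hand side as the object classifying abelian coverings of $X$ and to match it with the universal-coefficients description of $H_2^Q(X)$, proceeding in three stages: an orbitwise reduction, a covering-theoretic identification of each summand, and a comparison of universal objects. For the reduction, I would split the quandle chain complex along orbits. Writing out the differential in low degrees, the first entry of a tuple $(x_1, \dots, x_n)$ is only ever left unchanged or replaced by some $x_1 \lhd x_i$, so it never leaves the orbit $\Orb(x_1)$; hence the tuples whose first entry lies in a fixed orbit $\mathcal{O}$ span a subcomplex, and $C_*^Q(X)$ decomposes as the direct sum of these subcomplexes over $\Orb(X)$. Passing to homology yields the outer direct sum in \eqref{thm:eisermann} and reduces the problem to a single orbit, i.e. to the connected case based at $a_\mathcal{O}$.

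For a connected quandle I would then invoke Eisermann's covering theory: connected coverings based at $a_\mathcal{O}$ are governed by a fundamental group which, via the orbit--stabilizer correspondence, I aim to identify with $\Stab_{\As(X)}(a_\mathcal{O}) \cap \Ker(\varepsilon)$. Intersecting with $\Ker(\varepsilon)$ is precisely what discards the degree direction, along which $\As(X)$ surjects onto $\Z$ and which carries only the (already known) free part; what remains is the loop-like part of the stabilizer. Abelian coverings then correspond to homomorphisms from this fundamental group into an abelian group, and such homomorphisms factor through the abelianization $[\Stab_{\As(X)}(a_\mathcal{O}) \cap \Ker(\varepsilon)]_{Ab}$.

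Finally I would close the loop through the classification of extensions. An abelian extension of $X$ by an abelian group $A$ is classified by $H^2_Q(X;A)$, and since $H_1^Q(X) \cong \Z^m$ is free, universal coefficients give $H^2_Q(X;A) \cong \Hom(H_2^Q(X), A)$; thus $H_2^Q(X)$ and $\bigoplus_{\mathcal{O}} [\Stab_{\As(X)}(a_\mathcal{O}) \cap \Ker(\varepsilon)]_{Ab}$ represent the same functor $A \mapsto \{\text{abelian coverings of } X \text{ by } A\}$, and are therefore isomorphic. I expect the main obstacle to lie in the middle stage: pinning down the covering-theoretic fundamental group and proving it is exactly $\Stab_{\As(X)}(a_\mathcal{O}) \cap \Ker(\varepsilon)$ --- neither larger nor smaller --- which requires careful control of the $\As(X)$-action, of the degenerate subcomplex separating quandle from rack homology, and of the Galois correspondence between coverings and subgroups of the structure group. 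The homological formalities are routine by comparison; this identification carries the real content.
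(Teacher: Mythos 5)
The paper offers no proof of this statement: it is quoted as a known theorem of Eisermann, so there is no internal argument to compare yours against. Judged on its own, your outline does mirror the strategy of Eisermann's original paper (orbit splitting, covering theory with a Galois correspondence, then universal coefficients and a representability argument), but it contains one step that is wrong as stated and one gap that is essentially circular.

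The wrong step is the claimed reduction to the connected case. The splitting itself is fine: since the differential only ever replaces $x_1$ by $x_1 \lhd x_i$, the tuples with first entry in a fixed orbit $\mathcal{O}$ do span a subcomplex, and the rack and degenerate complexes decompose accordingly. But the summand attached to $\mathcal{O}$ is \emph{not} the quandle chain complex of the connected quandle $\mathcal{O}$: the remaining entries $x_2,\dots,x_n$ still range over all of $X$, so its homology is not $H_2^Q(\mathcal{O})$. A concrete counterexample is the trivial quandle $X$ on $m\geq 2$ elements ($x\lhd y = x$). Every orbit is a singleton, and $H_2^Q$ of a one-element quandle vanishes, yet $H_2^Q(X)\cong \Z^{m(m-1)}\neq 0$. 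Eisermann's formula sees this precisely because the stabilizer is taken in the structure group of the \emph{ambient} quandle: here $\As(X)\cong\Z^m$ acts trivially, so $\Stab_{\As(X)}(a_\mathcal{O})\cap\Ker(\varepsilon)\cong\Z^{m-1}$ for each of the $m$ orbits. So you may not replace $X$ by $\mathcal{O}$; the entire covering-theoretic analysis must be carried out for the orbit subcomplex relative to $X$, with $\As(X)$ rather than $\As(\mathcal{O})$.

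The gap is your middle stage: you propose to ``invoke Eisermann's covering theory'' to identify the fundamental group of coverings with $\Stab_{\As(X)}(a_\mathcal{O})\cap\Ker(\varepsilon)$ and to classify abelian coverings by $H^2_Q(X;A)$. That identification and that classification \emph{are} the content of the theorem being proved; citing them is circular, and you acknowledge yourself that this is where the real work lies. The parts you do supply --- the chain-level splitting, the freeness of $H_1^Q(X)$ giving $H^2_Q(X;A)\cong\Hom(H_2^Q(X),A)$, and the Yoneda-style conclusion --- are correct but are the routine bookkeeping around the missing core.
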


In this paper, we focus on Alexander quandles.
Section \ref{sec:2} is devoted to constructing a surjective group morphism $f\colon\As(\Al(M,T)) \to \Z\ltimes M$. This map can be identified with $f=(\varepsilon,\omega)$ where $\varepsilon$ is the degree morphism and $\omega$ the weight map which we will define explicitly.
We also give some properties of the semi-direct product and recall several results on Alexander quandles.
In Section \ref{sec:3}, we explore properties of the $2$-cocycle related to the central group extension defined by $f$.
Then, in Section \ref{sec:4}, we prove our main theorem by using explicit rewriting techniques:
\begin{thm}
The following map is an injective group morphism:
\[\begin{array}{cccc}
u\colon& \As(\Al(\znz{n},T)) & \longrightarrow & \Z^m\ltimes \znz{n},\\
&g & \longmapsto & (Ab(g), \varepsilon(g)),
\end{array}\]
where $m$ is the number of orbits, $Ab:\As(X) \to \Z^m$ the abelianization and $\omega$ the weight map.
\end{thm}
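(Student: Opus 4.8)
The plan is to reduce the statement to an injectivity property of the weight map on the commutator subgroup, and then to establish that property by an explicit rewriting.

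First I would record the elementary compatibilities between the three maps involved. Writing $u=(Ab,\omega)$ with $\omega$ the weight map valued in $\znz{n}$, note that the degree morphism factors through the abelianization, $\varepsilon=\sigma\circ Ab$ where $\sigma\colon\Z^m\to\Z$ sums the coordinates; in particular $\Ker(Ab)=[\As(X),\As(X)]\subseteq\Ker(\varepsilon)$. Since $f=(\varepsilon,\omega)$ is a homomorphism into $\Z\ltimes\znz{n}$, the semidirect-product twisting is trivial in degree $0$, so the restriction $\omega\colon\Ker(\varepsilon)\to\znz{n}$ is an honest group homomorphism, and for $g,h$ with $Ab(g)=Ab(h)$ one computes $\omega(gh^{-1})=\omega(g)-\omega(h)$. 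Hence $u(g)=u(h)$ holds exactly when $gh^{-1}\in[\As(X),\As(X)]\cap\Ker(\omega)$, that is, $\Ker(u)=[\As(X),\As(X)]\cap\Ker(\omega)$. It therefore suffices to prove that $\omega$ is injective on the commutator subgroup.

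Second I would set up the rewriting. Rewritten as $e_b^{-1}e_a e_b=e_{a\lhd b}$, the defining relations say that conjugation by a generator realizes a right translation of $X$; combined with $e_a e_b=e_b e_{a\lhd b}$, this lets one push all generators lying in a fixed orbit to the left and sort any word according to the orbit decomposition $X=\bigsqcup_{i=1}^m\mathcal{O}_i$. Fixing representatives $a_1,\dots,a_m$, I would prove by induction on word length that every element admits a normal form $\zeta^{s}\,e_{a_1}^{k_1}\cdots e_{a_m}^{k_m}$, where $(k_i)=Ab(g)$ and $\zeta$ is a distinguished commutator lying in $[\As(X),\As(X)]$. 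This simultaneously identifies $[\As(X),\As(X)]=\langle\zeta\rangle$ as a cyclic group, so that injectivity of $\omega$ on it amounts to the single equality $\ord(\zeta)=\ord(\omega(\zeta))$ in $\znz{n}$; the normal form also exhibits $\As(X)$ as $\Z^m\ltimes\langle\zeta\rangle$, through which $u$ becomes the evident inclusion into $\Z^m\ltimes\znz{n}$.

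Finally I would compute $\omega(\zeta)$ from the explicit weight map of Section~\ref{sec:2} and determine its order in $\znz{n}$, comparing it with the order of $\zeta$ read off from the rewriting. The main obstacle is exactly this rewriting step: proving that the system terminates and is confluent, so that the normal form is genuinely unique and, crucially, that no relation among the $e_a$ forces $\zeta$ to have smaller order than $\omega(\zeta)$ — equivalently, that $[\As(X),\As(X)]\cap\Ker(\omega)$ conceals no nontrivial element. Keeping track of how the within-orbit indices are altered while sorting generators across orbits, and resolving the resulting critical pairs, is where the bulk of the work lies; once the normal form and the order of $\zeta$ are secured, everything else is bookkeeping with the semidirect-product law already established for $f$.
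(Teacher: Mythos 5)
Your opening reduction is correct: since $u=(Ab,\omega)$ and $\omega$ restricts to a homomorphism on $\Ker(\varepsilon)\supseteq\Ker(Ab)=[\As(X),\As(X)]$, injectivity of $u$ is equivalent to $[\As(X),\As(X)]\cap\Ker(\omega)=\{1\}$, and the shape of the answer you predict (orbit-sorted normal form with cyclic defect group) is the right one. But the proposal stops exactly where the content of the theorem begins: everything you yourself call ``the main obstacle'' --- existence of the normal form, the identification $[\As(X),\As(X)]=\langle\zeta\rangle$, and the equality $\ord(\zeta)=\ord(\omega(\zeta))$ --- is deferred, and no mechanism is supplied that could carry it out. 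Sorting generators by orbit using only the braidings $e_ae_b=e_be_{a\lhd b}$ cannot produce your normal form: after sorting, one is left with arbitrary within-orbit words $e_{x_1}\cdots e_{x_k}$, and the braidings give no way to collapse such a word to $\zeta^s e_{a_i}^{k}$. The missing ingredient is a relation special to the linear case, namely \eqref{relationlinear}, $e_\alpha e_\beta = e_{\alpha-(1-T)\gamma}\, e_{\beta+(1-T)T\gamma}$, which the paper extracts from the bi-additivity of the pairing $\lambda(x,y)=\left[e_0^{-1}e_y,\,e_0^{-1}e_x\right]$ together with $\lambda(a,b)=ab\,\lambda(1,1)=0$; this is the \emph{only} place where $M=\znz{n}$ enters. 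The hypothesis is not cosmetic: for the connected Alexander quandle $\Al(\mathbb{F}_4,T)$ with $T$ a generator of $\mathbb{F}_4^{\times}$ one has $m=1$, $u=f$, and $\Ker(u)=K(X)\cong H_2^Q(X)\neq 0$, so $u$ is not injective there. Any proof must therefore invoke cyclicity somewhere; yours never does, and treating the problem as generic critical-pair resolution cannot succeed.

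Two further points. First, your concern about confluence is a detour the paper shows how to avoid: uniqueness of the normal form $e_{m-1}^{\varepsilon_{m-1}}\cdots e_1^{\varepsilon_1}e_0^{\varepsilon_0-1}e_{(1-T)c}$ is proved not by confluence but by observing that its parameters are functions of the invariants of $g$ --- the $\varepsilon_i$ are partial degrees, hence read off from $Ab(g)$, and $(1-T)c$ is recovered from $\omega(g)$ via the cocycle identity \eqref{eq:omega_1cocycle} --- so two normal forms of the same element coincide automatically. Second, your rewriting must also dispose of negative exponents, which the paper handles with the central powers $e_x^{d(x)}$ of \cref{prop:centralitepuissance}; and your parenthetical claim that the normal form ``exhibits $\As(X)$ as $\Z^m\ltimes\langle\zeta\rangle$'' asserts a splitting of the extension $1\to\langle\zeta\rangle\to\As(X)\to\Z^m\to1$ that you have not justified and that the theorem does not require.
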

It provides a normal form for the elements of the structure group, and finally allows us to compute the second quandle homology group of a linear Alexander quandle:
\begin{thm}
Let $X=\Al(\znz{n},T)$ be a linear Alexander quandle with $m$ orbits. The second quandle homology group of $X$ is given by
\[
H_2^Q(X) \cong \Z^{(m-1)m} \oplus \left[\gcd\left(m,\frac{n}{m}\right)\znz{n} \right]^m.
\]
\end{thm}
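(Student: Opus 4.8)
The plan is to evaluate Eisermann's formula \eqref{thm:eisermann} using the normal form provided by the injective morphism $u$. Write $T$ as multiplication by a unit $t\in\znz n$; the orbit of $a$ is the coset $a+(T-1)\znz n$, so that $m=[\znz n:(T-1)\znz n]=\gcd(t-1,n)$, whence $(T-1)\znz n=m\znz n$ (of order $n/m$) and $\Ann(T-1)=\tfrac nm\znz n$ (cyclic of order $m$). Identifying $\As(X)$ with $G:=u(\As(X))\le\Z^m\ltimes\znz n$, the right action of $\As(X)$ on $X$ factors through $f=(\varepsilon,\omega)$ and reads
\[ a\cdot g=T^{\varepsilon(g)}a+T^{\varepsilon(g)-1}(1-T)\,\omega(g). \]
For $g\in\Ker\varepsilon$ this is the translation $a\mapsto a+T^{-1}(1-T)\omega(g)$, which fixes one point iff it fixes all points iff $\omega(g)\in\Ann(T-1)$. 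Hence $\Stab_{\As(X)}(a_{\mathcal O})\cap\Ker\varepsilon$ is the \emph{same} subgroup
\[ S=\{g\in\As(X)\mid \varepsilon(g)=0,\ \omega(g)\in\Ann(T-1)\} \]
for every orbit $\mathcal O$, and so \eqref{thm:eisermann} collapses to $H_2^Q(X)\cong[S]_{Ab}^{\,m}$.

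Next I would check that $[S]_{Ab}=S$. Under $u$ the subgroup $\Ker\varepsilon$ maps into $\{(v,x)\in G\mid\deg v=0\}$, where $\deg v=\sum_i v_i$, and in the semidirect product any two elements with $\deg v=\deg w=0$ commute, since the only twist involved is by $T^{-\deg v}=1$. Thus $\Ker\varepsilon$ is abelian, hence so is $S$, and computing $H_2^Q(X)$ reduces to describing $S$ as a finitely generated abelian group and raising it to the $m$-th power.

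The heart of the argument is to control $G$ through its central fibre $G\cap(\{0\}\times\znz n)$. Writing the generators of $G$ as $u(e_a)=(\epsilon_{[a]},a)$, with $\epsilon_{[a]}$ the basis vector of $\Z^m$ indexed by the orbit of $a$, their commutators satisfy $[(\epsilon_i,a),(\epsilon_j,b)]=(0,(1-T^{-1})(a-b))$, and letting $a,b$ run over $\znz n$ these generate $\{0\}\times(T-1)\znz n=\{0\}\times m\znz n$. For the reverse inclusion I would use that $m\mid t-1$, so $t\equiv1\pmod m$ and $(v,x)\mapsto x\bmod m$ is a well-defined morphism $\pi\colon\Z^m\ltimes\znz n\to\znz m$; it factors through $Ab$ and therefore vanishes on the fibre over $0$, forcing $G\cap(\{0\}\times\znz n)=\{0\}\times m\znz n$. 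This cyclic group of order $n/m$ is exactly $\tor(\Ker\varepsilon)$.

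Finally I would read off $S$. Its torsion is $S\cap(\{0\}\times\znz n)=\{0\}\times\big(m\znz n\cap\tfrac nm\znz n\big)$, the intersection of two cyclic subgroups of $\znz n$, which is cyclic of order $\gcd(m,\tfrac nm)$; and the projection $(v,x)\mapsto v$ carries $S$ onto a finite-index sublattice of $\{v\in\Z^m\mid\deg v=0\}\cong\Z^{m-1}$, so $S$ has free rank $m-1$. Therefore $S\cong\Z^{m-1}\oplus\znz{\gcd(m,n/m)}$, and taking the $m$-th power yields $H_2^Q(X)\cong\Z^{(m-1)m}\oplus[\gcd(m,\tfrac nm)\znz n]^m$, as claimed. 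The main obstacle is precisely the exact computation of the fibre $G\cap(\{0\}\times\znz n)$: the commutator inclusion is routine, but the matching upper bound relies on the congruence $t\equiv1\pmod m$ through the auxiliary morphism $\pi$, and it is this equality that forces the torsion to be cyclic of the predicted order rather than merely some group of that order.
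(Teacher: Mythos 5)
Your proposal is correct in substance and shares the paper's skeleton --- Eisermann's formula, the observation that $\Stab_{\As(X)}(a_{\mathcal{O}})\cap\Ker(\varepsilon)$ is one and the same subgroup $S$ for every orbit, the identification of $\As(X)$ with its image $G$ under the injective morphism $u$, and the abelianness of $\Ker(\varepsilon)$ --- but your endgame is genuinely different from the paper's. The paper describes all of $\Ker(\varepsilon)$ as a pullback $\Z^{m-1}\underset{\znz{m}}{\times}\znz{n}$, by constructing a morphism $\overline{\omega}\colon\Z^{m-1}\to\znz{m}$ whose well-definedness rests on the explicit weight formula \eqref{omega} coming out of the rewriting; the homology group is then obtained by restricting the pullback to $\Ker(1-T)$ and performing a change of variables. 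You instead compute the abstract structure of $S$ directly: its torsion is $\bigl(G\cap(\{0\}\times\znz{n})\bigr)\cap\bigl(\{0\}\times\Ker(1-T)\bigr)$, and you determine the fibre $G\cap(\{0\}\times\znz{n})$ by a two-sided argument --- commutators of the generators $u(e_a)$ give the inclusion $\{0\}\times m\znz{n}\subseteq G\cap(\{0\}\times\znz{n})$ (your prefactor $(1-T^{-1})$ is off by a power of $T$ depending on the commutator convention, but this is harmless since $m\znz{n}$ is stable under units), while the auxiliary morphism $\pi\colon(v,x)\mapsto x\bmod m$, well defined because $m=\gcd(t-1,n)$ divides $t-1$, kills the kernel of the abelianization and gives the reverse inclusion. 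This is a clean alternative: it isolates the congruence $t\equiv1\pmod m$ as the precise reason the torsion is cyclic of order $\gcd(m,n/m)$, and it bypasses the closed formula for $\omega$; what it does not give you, and the paper's pullback does, is a complete description of $\Ker(\varepsilon)$ itself.

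Two points need repair, neither fatal. First, your action formula $a\cdot g=T^{\varepsilon(g)}a+T^{\varepsilon(g)-1}(1-T)\omega(g)$ is incorrect: the paper's formula is $a\cdot g=T^{\varepsilon(g)}a+(1-T)\omega(g)$ (test it on $g=e_be_c$). You only use it for $g\in\Ker(\varepsilon)$, where both versions say that $g$ fixes a point iff $(1-T)\omega(g)=0$, so your characterization of $S$ survives, but the formula should be corrected. Second, the claim that $(v,x)\mapsto v$ carries $S$ onto a finite-index sublattice of the degree-zero lattice is asserted rather than proved; it follows in two lines, and you should include them: since $\omega$ is additive on $\Ker(\varepsilon)$ by \eqref{eq:omega_1cocycle}, the map $g\mapsto\omega(g)\bmod\Ker(1-T)$ is a homomorphism from $\Ker(\varepsilon)$ to the finite group $\znz{n}/\Ker(1-T)$ with kernel $S$, so $[\Ker(\varepsilon):S]<\infty$; and $Ab$ maps $\Ker(\varepsilon)$ onto the whole degree-zero sublattice, since $(\varepsilon_{m-1},\dots,\varepsilon_1)$ is hit by $e_{m-1}^{\varepsilon_{m-1}}\cdots e_1^{\varepsilon_1}e_0^{-\sum_i\varepsilon_i}$. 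With these fixes your argument is complete and delivers $S\cong\Z^{m-1}\oplus\znz{\gcd(m,n/m)}$, hence the stated value of $H_2^Q(X)$.
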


\section{Structure group of an Alexander quandle}\label{sec:2}
Let $M$ be an abelian group and $T\in \Aut_{Grp}(M)$.
We denote by $X:= \Al(M,T)$ the Alexander quandle $(M,\lhd)$, defined by $a\lhd b = Ta + (1-T)b$.
\subsection{Semi-direct product}
We define a right group-action of $\Z$ on $M$ by
$$a \cdot 1 = Ta.$$
Let us consider the corresponding semi-direct product group-structure $\Z \ltimes M$, given by
$$(k,a)(m,b) = (k+m, T^m a + b).$$

\begin{lem}
Let $\alpha = (k,a), \beta=(m,b)\in \Z \ltimes M$. Then
\begin{itemize}
\item The identity of $\Z \ltimes M$ is $(0,0)$.
\item The inverse of $\alpha$ is given by $\alpha^{-1} = (-k, -T^{-k} a)$.
\item The conjugate of $\alpha$ by $\beta$ is given by $\beta^{-1} \alpha \beta = (k, T^m a + (1-T^k) b)$.
\end{itemize}
\end{lem}

\begin{proof}
We check
$$(k,a)(-k,-T^{-k}a) = (k - k, T^{-k}a - T^{-k} a) = (0,0).$$
For the conjugate, we get
\[ \beta^{-1} \alpha \beta = \left ( -m, -T^{-m} b \right ) \left ( k+m, T^m a + b\right ) = \left (k, T^m a + (1-T^k) b \right ).\qedhere\]
\end{proof}

\subsection{Structure group seen as an extension}
We recall that the structure group of a quandle $(X,\lhd)$ is defined by
\[ \As(X) = \left<e_a, a\in X \mid e_a e_b = e_b e_{a\lhd b}, a,b\in X \right>. \]

\begin{prop}
The following defines a surjective group morphism:
\[ \begin{array}{cccc}
f: & \As(\Al(M,T)) & \longrightarrow & \Z \ltimes M,\\
& e_a & \longmapsto & (1,a).
\end{array} \]
\end{prop}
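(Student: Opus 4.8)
The plan is to invoke the universal property of the presentation defining $\As(\Al(M,T))$. Since the structure group is presented by the generators $e_a$ ($a\in M$) subject to the relations $e_a e_b = e_b e_{a\lhd b}$, the assignment $e_a \mapsto (1,a)$ extends to a (necessarily unique) group morphism $f$ into $\Z\ltimes M$ precisely when the prescribed images of the generators satisfy those defining relations. So the only substantive point is well-definedness: I would verify, for all $a,b\in M$, the identity $(1,a)(1,b) = (1,b)(1,a\lhd b)$ in $\Z\ltimes M$.

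For this verification I would expand both sides using the twisted product law $(k,a)(m,b) = (k+m,\,T^m a + b)$ recalled above:
\[
(1,a)(1,b) = (2,\,Ta+b), \qquad (1,b)(1,a\lhd b) = (2,\,Tb + (a\lhd b)).
\]
The first coordinates agree, and for the second coordinates I would substitute the Alexander operation $a\lhd b = Ta + (1-T)b$ to obtain $Tb + (a\lhd b) = Tb + Ta + b - Tb = Ta + b$. Hence both sides coincide, the relations are respected, and $f$ is a well-defined group morphism. This is really the heart of the matter: the defining relation of $\As(X)$ is exactly the braid-type relation that the semidirect product law was designed to encode, so the only potential obstacle is a sign or a misplaced application of $T$ in the twisted multiplication, which the computation above dispels.

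It then remains to establish surjectivity, which is elementary. The image contains every $(1,a)$, in particular $(1,0)$, whose inverse is $(-1,0)$ by the inversion formula of the lemma. Therefore $\im(f)$ contains $(1,a)(1,0)^{-1} = (1,a)(-1,0) = (0,\,T^{-1}a)$ for all $a\in M$; since $T^{-1}$ is an automorphism of $M$, this yields $(0,c)\in\im(f)$ for every $c\in M$. Finally, from $(1,0)^k = (k,0)$ and $(k,0)(0,a) = (k,a)$ we recover an arbitrary element $(k,a)\in\Z\ltimes M$, so $f$ is surjective.
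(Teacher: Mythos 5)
Your proposal is correct and takes essentially the same approach as the paper: the well-definedness check via the relation $(1,a)(1,b)=(2,Ta+b)=(1,b)(1,a\lhd b)$ is identical. The only cosmetic difference is surjectivity, where the paper simply exhibits the explicit preimage $f(e_0^{k-1}e_a)=(k,a)$ in one line, while you assemble $(k,a)$ in stages from $(0,T^{-1}a)$ and $(k,0)$ --- same substance, slightly longer route.
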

\begin{proof}
Begin with showing $f(e_a) f(e_b) = f(e_b) f(e_{a\lhd b})$.
For all $a,b\in M$,
\begin{align*}
f(e_a) f(e_b) & = (1,a) (1,b)\\
& = (2, Ta +b).
\end{align*}
We get
\begin{align*}
f(e_b) f(e_{a\lhd b}) & = (2, Tb + a\lhd b)\\
& = (2, Ta + b),
\end{align*}
so $f$ defines a group morphism.

Moreover, for all $(k,a)\in \Z\ltimes M$, we have
\[ f(e_0^{k-1} e_a) = (k,a) \]
hence $f$ is surjective.
\end{proof}
We denote by $\omega: \As(X) \to X$ the composition of $f$ with the projection on the second coordinate.
For $g\in \As(X)$, the quantity $\omega(g)$ will be called the \emph{weight} of $g$.
For instance,
\[
\omega(e_a e_b) = Ta + b = \omega(e_b e_{a\lhd b}).
\]
More generally,
\[
\omega(e_{a_1} \ldots e_{a_k}) = T^{k-1} a_1 + \ldots + T a_{k-1} + a_k.
\]
\begin{lem}
For all $g,h\in \As(X)$, we have
\begin{equation}\label{eq:omega_1cocycle}
\omega(gh) = T^{\varepsilon(h)} \omega(g) + \omega(h),
\end{equation}
which can be thought of as a $1$-cocycle condition.
\end{lem}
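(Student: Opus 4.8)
The plan is to deduce the identity directly from the fact, established in the preceding proposition, that $f$ is a group morphism into the semi-direct product $\Z \ltimes M$, whose multiplication law we already have explicitly. The key preliminary observation is that $f$ splits as $f = (\varepsilon, \omega)$, i.e.\ the first coordinate of $f(g)$ is nothing but the degree $\varepsilon(g)$. Indeed, the projection $p_1 \colon \Z \ltimes M \to \Z$ onto the first factor is a group morphism, since the first coordinate of the product $(k,a)(m,b)$ is simply $k+m$; hence $p_1 \circ f \colon \As(X) \to \Z$ is a morphism sending every generator $e_a$ to $1$, and by the universal property of the presentation this is exactly $\varepsilon$. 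Thus $f(g) = (\varepsilon(g), \omega(g))$ for all $g$.

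Next I would simply expand $f(gh) = f(g) f(h)$ using the semi-direct product multiplication. Writing $f(g) = (\varepsilon(g), \omega(g))$ and $f(h) = (\varepsilon(h), \omega(h))$ and applying $(k,a)(m,b) = (k+m, T^m a + b)$ gives
\[
f(g) f(h) = \big(\varepsilon(g) + \varepsilon(h),\ T^{\varepsilon(h)} \omega(g) + \omega(h)\big).
\]
Comparing second coordinates with $f(gh) = (\varepsilon(gh), \omega(gh))$ yields the claimed formula $\omega(gh) = T^{\varepsilon(h)} \omega(g) + \omega(h)$, while comparing first coordinates recovers, as a sanity check, the additivity of $\varepsilon$.

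I expect essentially no obstacle here: the statement is a coordinate-wise reading of the morphism property of $f$. The only point deserving a word of justification is the identification of the first coordinate of $f$ with $\varepsilon$, and even that follows formally from the presentation. The $1$-cocycle interpretation then comes for free: viewing $M$ as a module on which $\As(X)$ acts through $T^{\varepsilon(\cdot)}$, the formula says precisely that $\omega$ is a $1$-cocycle for this twisted action, which is the structural reason the identity holds.
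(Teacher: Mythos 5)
Your proof is correct and is exactly the argument the paper intends: the lemma is stated without proof there, as an immediate consequence of $f=(\varepsilon,\omega)$ being a group morphism into $\Z\ltimes M$, which is precisely your coordinate-wise expansion of $f(gh)=f(g)f(h)$. Your extra justification that the first coordinate of $f$ is $\varepsilon$ (via the universal property of the presentation) matches the paper's own remark that $f=\varepsilon\times\omega$.
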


\begin{cor}\label{cor:injectivity}
Alexander quandles are injective, which means that the map
\[e_{\text{\textunderscore}} :\begin{array}{ccc}
X & \to & \As(X), \\
x & \mapsto & e_x
\end{array}
\]
is injective.
\end{cor}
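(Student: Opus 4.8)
The plan is to produce a one-sided inverse to the map $e_{\text{\textunderscore}}$. Injectivity of $x \mapsto e_x$ is not automatic for an arbitrary quandle: the defining relations of $\As(X)$ could in principle force two distinct generators to coincide, so this is a genuine property to be checked. The cleanest way to rule this out is to exhibit a set-theoretic retraction, that is, a map $r\colon \As(X) \to X$ with $r(e_x) = x$ for every $x \in X$; the existence of such an $r$ immediately forces $e_{\text{\textunderscore}}$ to be injective.

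The natural candidate for this retraction is already available from the preceding proposition: the weight map $\omega\colon \As(X) \to X$, obtained by composing $f$ with the projection onto the second coordinate. By the very definition $f(e_a) = (1,a)$, we read off that $\omega(e_a) = a$ for all $a \in X$, so $\omega \circ e_{\text{\textunderscore}} = \id_X$ on the level of underlying sets. This is exactly the retraction we want.

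With this in hand the argument is a single line: suppose $e_x = e_y$ in $\As(X)$. Applying $\omega$ to both sides and using $\omega(e_x) = x$, $\omega(e_y) = y$, we obtain $x = y$. Hence $e_{\text{\textunderscore}}$ is injective, which is the assertion that the Alexander quandle is injective.

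I do not expect any real obstacle here; the substantive work was already carried out in constructing the morphism $f$ and verifying that it is well defined, since that is where the quandle relations $e_a e_b = e_b e_{a \lhd b}$ had to be reconciled with the semidirect-product multiplication. Once $f$ (equivalently $\omega$) is known to exist, the corollary is an immediate formal consequence. The only point worth stating explicitly is that we use $\omega$ purely as a map of sets — no group structure is needed for the injectivity conclusion — so the retraction argument is completely elementary.
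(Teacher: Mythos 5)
Your proposal is correct and is exactly the paper's own argument: the weight map $\omega$ (the second coordinate of $f$) satisfies $\omega(e_x)=x$, so $\omega \circ e_{\text{\textunderscore}} = \id_X$ and $e_{\text{\textunderscore}}$ is injective. The paper states this retraction argument in a single line; your write-up only adds the (accurate) observation that the real work was in constructing $f$.
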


\begin{proof} The composition of $e_{\text{\textunderscore}}$ with $\omega$ gives the identity map of $X$.
\end{proof}

\begin{lem}\label{lem:conjg}
For any quandle $X$, $e_x g = g e_{x \cdot g}$ for all $x\in X$ and $g\in \As(X)$.
\end{lem}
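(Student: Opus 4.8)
The plan is to prove the identity first for $g$ a generator $e_b$, then for an inverse generator $e_b^{-1}$, and finally to extend it to an arbitrary $g\in\As(X)$ by induction on word length. Throughout I rely on the fact, recalled in the introduction, that $X$ carries a well-defined right action of $\As(X)$ with $a\cdot e_b = a\lhd b$: this is guaranteed by self-distributivity (which makes the defining relations act consistently) together with bijectivity of the right translations (which makes each generator act invertibly, and hence each element of $\As(X)$). It is clarifying to note that the claim $e_x g = g e_{x\cdot g}$ is equivalent to $g^{-1} e_x g = e_{x\cdot g}$, i.e. to saying that conjugation by $g$ sends the generator $e_x$ to the generator $e_{x\cdot g}$.

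For the base case $g=e_b$, the relation $e_x e_b = e_b e_{x\lhd b}$ is exactly a defining relation of $\As(X)$, and $x\cdot e_b = x\lhd b$, so there is nothing more to check. For $g=e_b^{-1}$, I would invoke bijectivity of the right translation $\tau_b$ (quandle axiom (2)): setting $z = x\cdot e_b^{-1}$, so that $z\lhd b = z\cdot e_b = x$, the defining relation gives $e_z e_b = e_b e_{z\lhd b} = e_b e_x$, which rearranges to $e_x e_b^{-1} = e_b^{-1} e_z = e_b^{-1} e_{x\cdot e_b^{-1}}$, as wanted.

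For the inductive step, suppose the identity holds, for all $x\in X$, for two elements $g,h\in\As(X)$. Then for the product $gh$ I compute
\[
e_x(gh) = (e_x g)h = g\, e_{x\cdot g}\, h = g h\, e_{(x\cdot g)\cdot h} = (gh)\, e_{x\cdot(gh)},
\]
where the final equality uses that $\cdot$ is a right action, so that $(x\cdot g)\cdot h = x\cdot(gh)$. Since every element of $\As(X)$ is a finite product of generators $e_b$ and inverse generators $e_b^{-1}$, the two base cases together with this step yield the identity for all $g\in\As(X)$ by induction on word length.

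I do not expect any serious obstacle here. The only point requiring genuine care is the inverse-generator base case, where bijectivity of the right translation is essential both to define the preimage $z=x\cdot e_b^{-1}$ and to rewrite the relation; once that is in hand, the rest is bookkeeping with the right-action compatibility $(x\cdot g)\cdot h = x\cdot(gh)$, and the argument is valid for an arbitrary quandle, not just an Alexander one.
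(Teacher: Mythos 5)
Your proof is correct and follows essentially the same route as the paper's: decompose $g$ into a word in the letters $e_y^{\pm 1}$ and push $e_x$ through one letter at a time, using the right-action compatibility to chain the conjugations. If anything, you are more careful than the paper, which simply asserts the relation $e_a e_{y}^{\varepsilon} = e_{y}^{\varepsilon} e_{a\cdot e_{y}^{\varepsilon}}$ for $\varepsilon=-1$ without comment, whereas you derive the inverse-letter case explicitly from the defining relation and the bijectivity of $\tau_y$.
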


\begin{proof}
Let $x\in X$ and $g\in \As(X)$.
There exists $y_1,\ldots, y_n\in X$ and $\varepsilon_1,\ldots, \varepsilon_n\in \{\pm 1\}$ such that $g=e_{y_1}^{\varepsilon_1} \ldots e_{y_n}^{\varepsilon_n}$.
Since for all $a\in X$,
\[
e_a e_{y_i}^{\varepsilon_i} = e_{y_i}^{\varepsilon_i} e_{a\cdot e_{y_i}^{\varepsilon_i}},
\]
then we have
\[
e_x g = g e_{x\cdot g}.
\]
\end{proof}

\begin{cor}\label{cor:centerofas}
If $X$ is an Alexander quandle, then
\[
\bigcap_{x\in X} \Stab_{\As(X)}(x) = Z(\As(X)).
\]
\end{cor}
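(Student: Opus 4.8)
The plan is to establish the two inclusions separately, using Lemma~\ref{lem:conjg} as the essential tool and invoking the Alexander hypothesis only through the injectivity of $e_{\text{\textunderscore}}$ recorded in Corollary~\ref{cor:injectivity}. In fact one of the two inclusions holds for an arbitrary quandle, and it is worth isolating where the special structure enters.

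For the inclusion $\bigcap_{x\in X}\Stab_{\As(X)}(x)\subseteq Z(\As(X))$, I would take $g$ fixing every $x\in X$, so that $x\cdot g = x$ for all $x$. Applying Lemma~\ref{lem:conjg} then gives $e_x g = g\, e_{x\cdot g} = g\, e_x$ for every $x\in X$, so that $g$ commutes with each generator $e_x$ of $\As(X)$ and is therefore central. Note that this direction uses nothing specific to Alexander quandles.

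For the reverse inclusion $Z(\As(X))\subseteq\bigcap_{x\in X}\Stab_{\As(X)}(x)$, I would fix $g\in Z(\As(X))$ and $x\in X$. Centrality gives $g\, e_x = e_x g$, while Lemma~\ref{lem:conjg} gives $e_x g = g\, e_{x\cdot g}$; combining the two and left-cancelling $g$ in the group yields $e_x = e_{x\cdot g}$. This is exactly where the Alexander hypothesis is needed: by Corollary~\ref{cor:injectivity} the map $e_{\text{\textunderscore}}$ is injective, whence $x = x\cdot g$, i.e.\ $g$ stabilises $x$. As $x$ was arbitrary, $g$ lies in the intersection of all stabilisers.

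Once Lemma~\ref{lem:conjg} and Corollary~\ref{cor:injectivity} are available, the whole argument is a short manipulation, so there is no serious technical obstacle. The only genuine content is the cancellation step combined with injectivity, and it is precisely the failure of injectivity in a general quandle that would break the proof: without it, a central element could act nontrivially on a fibre $\{y\in X : e_y = e_x\}$ while still fixing all the generators. Injectivity of $e_{\text{\textunderscore}}$ for Alexander quandles rules this out and makes the equality immediate.
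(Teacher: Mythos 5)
Your proof is correct and follows exactly the paper's own argument: Lemma~\ref{lem:conjg} gives the inclusion of the stabiliser intersection into the centre, and centrality combined with the injectivity of $e_{\text{\textunderscore}}$ from Corollary~\ref{cor:injectivity} gives the converse. Your added observation that only the second inclusion uses the Alexander hypothesis is accurate and a nice clarification, but the substance of the argument is the same.
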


\begin{proof}
If $g\in \bigcap\limits_{x\in X} \Stab_{\As(X)}(x)$, then by \cref{lem:conjg} we have $g\in Z(\As(X))$.

If $g\in Z(\As(X))$, then $\forall x \in X, e_{x\cdot g} = g^{-1} e_x g = e_x$.
By \cref{cor:injectivity}, we get $x \cdot g = x$, and hence $g\in \Stab_{\As(X)}(x)$.
\end{proof}

\begin{rem}
The composition of $f$ with the projection on the first coordinate gives the degree morphism $\varepsilon$, so we can write $f=\varepsilon\times \omega$.
\end{rem}

If we denote the kernel of $f$ by $K(X):= \Ker(f)$, we get a group extension
\begin{equation} \label{extension} \tag{E}
\begin{array}{ccccccccc}
1 & \longrightarrow & K(X) & \longrightarrow & \As(X) & \overset{f}{\longrightarrow} & \Z \ltimes M & \longrightarrow & 1\\
& & & & g & \longmapsto & \left(\varepsilon(g),\omega(g) \right) & &
\end{array}.
\end{equation}

\subsection{Second homology group and centrality of the extension}
Recall that the structure group $\As(X)$ acts on $X$ by the right translations:
\[
a \cdot e_b = a\lhd b,\quad a,b\in X.
\]
Moreover, if $a\in X$ and $g\in \As(X)$, we have
\[
e_a g = g e_{a\cdot g}.
\]

The action of $g\in \As(X)$ is fully determined by the image of $g$ by $f$.
\begin{lem}
Let $X=\Al(M,T)$.
For all $x\in X$ and $g\in \As(X)$,
\[
x\cdot g = T^{\varepsilon(g)} x + (1-T)\omega(g).
\]
\end{lem}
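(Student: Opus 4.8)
The plan is to prove the formula by induction on the length of $g$ written as a word in the generators $e_a^{\pm 1}$, exploiting that the right translation is a genuine right action, so that $x \cdot (gh) = (x \cdot g) \cdot h$. Since $\varepsilon$ and $\omega$ are already well-defined on $\As(X)$, the right-hand side is independent of the chosen word, so I only need to match it against the action computed word by word.

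For the base case I would treat a single generator. On one hand $x \cdot e_a = x \lhd a = Tx + (1-T)a$ by definition of the Alexander quandle operation; on the other hand $\varepsilon(e_a) = 1$ and $\omega(e_a) = a$, so $T^{\varepsilon(e_a)} x + (1-T)\omega(e_a) = Tx + (1-T)a$, and the two agree. For the inverse generator $e_a^{-1}$ I would invert the right translation $\tau_a$: solving $y = Tx' + (1-T)a$ for $x'$ gives $x' = T^{-1}y + (1-T^{-1})a$, that is $x \cdot e_a^{-1} = T^{-1}x + (1-T)(-T^{-1}a)$. Since $\varepsilon(e_a^{-1}) = -1$, and applying \eqref{eq:omega_1cocycle} to $e_a e_a^{-1} = 1$ yields $\omega(e_a^{-1}) = -T^{-1}a$, this again matches the claimed formula.

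For the inductive step, suppose the formula holds for $g$ and for $h$. Then
\[
x \cdot (gh) = (x \cdot g)\cdot h = T^{\varepsilon(h)}\!\left(T^{\varepsilon(g)}x + (1-T)\omega(g)\right) + (1-T)\omega(h).
\]
Expanding, and using that $\varepsilon$ is a homomorphism, so $\varepsilon(gh) = \varepsilon(g)+\varepsilon(h)$, together with the $1$-cocycle identity \eqref{eq:omega_1cocycle} in the form $T^{\varepsilon(h)}\omega(g) + \omega(h) = \omega(gh)$, the right-hand side collapses to $T^{\varepsilon(gh)}x + (1-T)\omega(gh)$, as required.

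Alternatively, and perhaps more cleanly, I would observe that $g \mapsto \phi_g$, where $\phi_g(x) := T^{\varepsilon(g)}x + (1-T)\omega(g)$, defines an anti-homomorphism from $\As(X)$ to $\Sym(X)$; this is exactly the content of the step above, which shows $\phi_{gh} = \phi_h \circ \phi_g$, each $\phi_g$ being an affine bijection with invertible linear part $T^{\varepsilon(g)}$. Since the genuine right action is also such an anti-homomorphism, and the two agree on the generating set $\{e_a\}$, they coincide on all of $\As(X)$. The only mildly delicate point is the behaviour on inverse generators: it is automatic in the anti-homomorphism formulation, but in the purely inductive argument it must be checked by hand, and is settled by the cocycle relation \eqref{eq:omega_1cocycle} as above.
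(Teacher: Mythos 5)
Your proof is correct and follows essentially the same route as the paper: induction on the word length, with the inductive step collapsing via the $1$-cocycle identity \eqref{eq:omega_1cocycle}. The only differences are cosmetic --- you verify the inverse-generator base case $e_a^{-1}$ explicitly (the paper asserts the case $\varepsilon_1=\pm 1$ without computation), and your anti-homomorphism reformulation is a clean repackaging of the same induction rather than a different argument.
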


\begin{proof}
We proceed by induction on the length of the word $e_{a_1}^{\varepsilon_1} \ldots e_{a_n}^{\varepsilon_n}$.
If $n=1$, then
\[
x\cdot e_{a_1}^{\varepsilon_1} = T^{\varepsilon_1} x + (1-T) \omega\left(e_{a_1}^{\varepsilon_1}\right).
\]
Now, take $n\in \N$.
Then, using \cref{eq:omega_1cocycle}, we get
\begin{align*}
x\cdot e_{a_1}^{\varepsilon_1} \ldots e_{a_n}^{\varepsilon_n} e_{a_{n+1}}^{\varepsilon_{n+1}} 
& = T^{\varepsilon_{n+1}} \left[ T^{\varepsilon_1 +\ldots \varepsilon_n} x + (1-T) \omega\left(e_{a_1}^{\varepsilon_1} \ldots e_{a_n}^{\varepsilon_n}\right) \right] + (1-T) \omega\left(e_{a_{n+1}}^{\varepsilon_{n+1}}\right) \\
& = T^{\varepsilon_1 + \ldots + \varepsilon_n + \varepsilon_{n+1}}x + (1-T) \omega\left(e_{a_1}^{\varepsilon_1} \ldots e_{a_n}^{\varepsilon_n} e_{a_{n+1}}^{\varepsilon_{n+1}} \right). \qedhere
\end{align*}
\end{proof}

\begin{prop}
The extension \eqref{extension} is central.
\end{prop}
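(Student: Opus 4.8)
The plan is to show that the kernel $K(X) = \Ker(f)$ is contained in the center $Z(\As(X))$, since this containment is precisely the meaning of centrality for the extension \eqref{extension}. The three ingredients assembled just above fit together immediately for this purpose, so the argument is essentially a matter of chaining them in the right order.

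First I would take an arbitrary $g \in K(X)$. Since $f = \varepsilon \times \omega$, this means exactly that $\varepsilon(g) = 0$ and $\omega(g) = 0$. Next I would invoke the preceding lemma describing the action of $\As(X)$ on $X$, namely $x \cdot g = T^{\varepsilon(g)} x + (1-T)\omega(g)$ for every $x \in X$. Substituting $\varepsilon(g) = 0$ and $\omega(g) = 0$ yields $x \cdot g = x$ for all $x$, that is, $g \in \bigcap_{x \in X} \Stab_{\As(X)}(x)$. Finally I would apply \cref{cor:centerofas}, which identifies this intersection of stabilizers with $Z(\As(X))$. Hence $g \in Z(\As(X))$, and as $g$ was arbitrary we obtain $K(X) \subseteq Z(\As(X))$, establishing centrality.

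There is essentially no computational difficulty here: the action formula already isolates the entire dependence on $g$ through the pair $(\varepsilon(g), \omega(g)) = f(g)$, so elements of the kernel act trivially for the most transparent reason possible. The only conceptual point is to read centrality through the characterization of the center as the common stabilizer (\cref{cor:centerofas}), rather than by directly verifying that $g$ commutes with every generator $e_a$. That direct route would work equally well — one would use \cref{lem:conjg} to write $e_x g = g\, e_{x \cdot g}$ and then the injectivity of $e_{\text{\textunderscore}}$ (\cref{cor:injectivity}) to pass from $x \cdot g = x$ to $e_{x\cdot g} = e_x$ — but this is exactly the reasoning already packaged into \cref{cor:centerofas}, so invoking that corollary is the cleanest presentation.
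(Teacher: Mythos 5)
Your proof is correct and takes essentially the same route as the paper's: both reduce to the observation that $g\in K(X)$ means $\varepsilon(g)=\omega(g)=0$, so by the action formula $g$ acts trivially on $X$, and centrality then follows from \cref{lem:conjg}. The only cosmetic difference is that the paper writes out $e_x g = g e_{x\cdot g} = g e_x$ directly, whereas you route the last step through \cref{cor:centerofas}, whose relevant inclusion is exactly that computation.
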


\begin{proof}
Let $g\in K(X)$.
Then, $\varepsilon(g)=0$ and $\omega(g)=0$.
Then, for all $x\in X$,
\[
e_x g = g e_{T^{\varepsilon(g)} x + (1-T)\omega(g)} = g e_x.
\]
and hence $g$ is central.
\end{proof}

\begin{prop}\label{stabker}
Let $X=\Al(M,T)$.
For all $x_0\in X$, $\Stab_{\As(X)} (x_0)\cap \Ker(\varepsilon)$ is the abelian group $\omega^{-1}(\Ker(1-T))\cap \Ker(\varepsilon)$, and in particular does not depend on the choice of $x_0$.
\end{prop}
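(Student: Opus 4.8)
The plan is to first characterize the set $\Stab_{\As(X)}(x_0)\cap \Ker(\varepsilon)$ explicitly, and then deduce both its independence of $x_0$ and its abelianness from facts already established.

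First I would take $g\in \Ker(\varepsilon)$, so that $\varepsilon(g)=0$ and $T^{\varepsilon(g)}=\id$. The action formula then collapses to
\[
x_0\cdot g = x_0 + (1-T)\omega(g),
\]
so that $g$ acts on $X$ as the \emph{translation} by $(1-T)\omega(g)$, which is independent of the base point. Hence $g$ fixes $x_0$ if and only if $(1-T)\omega(g)=0$, i.e.\ if and only if $\omega(g)\in \Ker(1-T)$. This yields the equality
\[
\Stab_{\As(X)}(x_0)\cap \Ker(\varepsilon) = \omega^{-1}(\Ker(1-T))\cap \Ker(\varepsilon),
\]
whose right-hand side visibly does not involve $x_0$; this already gives the independence claim.

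It remains to see that this common set $S$ is an abelian group. The key observation is that, since $S$ does not depend on $x_0$, intersecting over all base points changes nothing:
\[
S = \bigcap_{x_0\in X}\bigl(\Stab_{\As(X)}(x_0)\cap \Ker(\varepsilon)\bigr) = \Bigl(\bigcap_{x_0\in X}\Stab_{\As(X)}(x_0)\Bigr)\cap \Ker(\varepsilon).
\]
By \cref{cor:centerofas} the inner intersection is the center $Z(\As(X))$, so $S = Z(\As(X))\cap \Ker(\varepsilon)$ is a subgroup of the (abelian) center, hence abelian.

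The only genuinely delicate point is the abelianness, and the idea making it work is to avoid computing commutators directly: instead one recognizes $S$ as a slice of the center, by exploiting the translation description of the $\Ker(\varepsilon)$-action together with \cref{cor:centerofas}. As a sanity check one may independently verify that $\omega^{-1}(\Ker(1-T))\cap \Ker(\varepsilon)$ is closed under products and inverses using the $1$-cocycle relation \eqref{eq:omega_1cocycle}, which with $\varepsilon(g)=\varepsilon(h)=0$ gives $\omega(gh)=\omega(g)+\omega(h)$ and $\omega(g^{-1})=-\omega(g)$, confirming that it is a subgroup.
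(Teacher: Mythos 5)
Your proof is correct and follows essentially the same route as the paper: both use the action formula $x\cdot g = T^{\varepsilon(g)}x + (1-T)\omega(g)$ to show that an element of $\Ker(\varepsilon)$ stabilizes $x_0$ exactly when $\omega(g)\in\Ker(1-T)$, then identify the resulting set with $Z(\As(X))\cap\Ker(\varepsilon)$ via \cref{cor:centerofas} to get abelianness. Your version is slightly more explicit in stating the "if and only if" and in noting the translation interpretation, but the structure of the argument is identical.
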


\begin{proof}
For all $g\in \As(X)$ and $x\in X$, we have
$$x \cdot g = T^{\varepsilon(g)} x + (1-T) \omega(g).$$
If $g\in \Stab_{\As(X)}(x_0)\cap \Ker(\varepsilon)$, then
$$ x_0 = x_0\cdot g = x_0 +(1-T) \omega(g).$$
We deduce that $\omega(g)\in \Ker(1-T)$, and hence for all $x\in X$, $g\in \Stab_{\As(X)}(x)$.
Thus, $$g\in \left(\bigcap\limits_{x\in X} \Stab_{\As(X)}(x)\right)\bigcap \Ker(\varepsilon),$$
so
$$\Stab_{\As(X)}(x_0)\cap \Ker(\varepsilon) = \left( \bigcap \limits_{x\in X} \Stab_{\As(X)}(x) \right)\bigcap\Ker(\varepsilon).$$
Since $\bigcap\limits_{x\in X} \Stab_{\As(X)}(x) =Z(\As(X))$, we get that $\Stab_{\As(X)}(x_0)\cap \Ker(\varepsilon)$ is abelian.
\end{proof}

The previous proposition allows us to simplify Eisermann's formula (\ref{thm:eisermann}) to get:
\begin{cor}
Let $X$ be an Alexander quandle.
If $m$ denotes the number of orbits of $X$, then
\[
H_2^Q(X) \cong \left(\Stab_{\As(X)}(0)\cap \Ker(\varepsilon) \right)^m.
\]
\end{cor}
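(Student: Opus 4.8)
The plan is to feed \cref{stabker} directly into Eisermann's formula~(\ref{thm:eisermann}). That formula writes $H_2^Q(X)$ as a direct sum, indexed by the orbits $\mathcal{O}\in\Orb(X)$, of the abelianized groups $\left[\Stab_{\As(X)}(a_\mathcal{O})\cap\Ker(\varepsilon)\right]_{Ab}$, with one chosen representative $a_\mathcal{O}$ per orbit. Since $m$ is by definition the number of orbits, this sum has exactly $m$ summands, so the whole task reduces to identifying each of them.

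The identification rests on the two assertions of \cref{stabker}. First, each group $\Stab_{\As(X)}(a_\mathcal{O})\cap\Ker(\varepsilon)$ is already abelian, so it equals its own abelianization and the functor $[\,\cdot\,]_{Ab}$ may simply be erased. Second, \cref{stabker} states that this group is $\omega^{-1}(\Ker(1-T))\cap\Ker(\varepsilon)$ independently of the representative chosen; in particular, every summand equals the value obtained at $x_0 = 0 \in M$, namely $\Stab_{\As(X)}(0)\cap\Ker(\varepsilon)$. A direct sum of $m$ equal copies of one abelian group being its $m$-th power, we arrive at the stated isomorphism.

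I do not anticipate a real obstacle: the corollary merely repackages \cref{stabker} inside Eisermann's formula, the only genuine input being that abelianness is precisely what makes the abelianization in~(\ref{thm:eisermann}) superfluous. The single point deserving a word of care is that $0$ is a legitimate element of the underlying set $M$ of $X$, so that $\Stab_{\As(X)}(0)$ indeed serves as the common value of all the orbit-wise stabilizers.
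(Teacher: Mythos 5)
Your proposal is correct and follows exactly the route the paper takes: the corollary is stated there as an immediate consequence of \cref{stabker} plugged into Eisermann's formula, and your write-up simply makes explicit the two points the paper leaves implicit (abelianness removes the $[\,\cdot\,]_{Ab}$, and independence of the representative lets every summand be replaced by $\Stab_{\As(X)}(0)\cap\Ker(\varepsilon)$). Nothing is missing; your remark that $0\in M$ is a valid basepoint is a fine touch of care.
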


In the connected case, the following gives an interpretation of the kernel $K(X)$.

\begin{lem}
The orbits of an Alexander quandle are the cosets of $\im(1-T)$.
\end{lem}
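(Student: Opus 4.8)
The plan is to prove that the orbit partition of $X = \Al(M,T)$ and the partition of $M$ into cosets of $\im(1-T)$ coincide, by establishing a double inclusion of partitions. Throughout I would use the action formula established above, namely $x \cdot g = T^{\varepsilon(g)} x + (1-T)\omega(g)$ for all $x \in X$ and $g \in \As(X)$. Since the underlying set of $X$ is the abelian group $M$ and $\im(1-T)$ is a subgroup, two elements $x,y$ lie in the same coset precisely when $y - x \in \im(1-T)$; so it suffices to compare this condition with lying in the same orbit.

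First I would show that every orbit is contained in a single coset. Suppose $y = x \cdot g$ for some $g \in \As(X)$. Then
\[
y - x = \left(T^{\varepsilon(g)} - 1\right)x + (1-T)\omega(g).
\]
The second summand clearly lies in $\im(1-T)$. For the first, I would note that in the Laurent polynomial ring $\Z[T,T^{-1}]$ the element $T^k - 1$ is divisible by $1-T$ for every integer $k$: for $k \geq 0$ one has $T^k - 1 = -(1-T)(1 + T + \cdots + T^{k-1})$, and for $k < 0$ one writes $T^k - 1 = T^k\left(1 - T^{-k}\right)$, where $1 - T^{-k}$ is divisible by $1-T$ since $-k > 0$. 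Hence $\left(T^{\varepsilon(g)} - 1\right)x \in \im(1-T)$ as well, so $y - x \in \im(1-T)$, and the orbit of $x$ is contained in the coset $x + \im(1-T)$.

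For the reverse inclusion, suppose $y - x \in \im(1-T)$, say $y - x = (1-T)z$ with $z \in M$; I want to produce $g \in \As(X)$ with $x \cdot g = y$. Choosing $g$ with $\varepsilon(g) = 0$ collapses the action formula to $x \cdot g = x + (1-T)\omega(g)$, so it is enough to find $g$ with $\varepsilon(g) = 0$ and $\omega(g) = z$. Since $f = \varepsilon \times \omega$ is surjective, such a $g$ exists; explicitly $g = e_0^{-1} e_z$ satisfies $f(g) = (0,z)$. Then $x \cdot g = x + (1-T)z = y$, so $x$ and $y$ lie in the same orbit. Combining the two inclusions gives the claim. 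The only mildly delicate point is the divisibility of $T^k - 1$ by $1-T$ for negative $k$ in the first inclusion; everything else is a direct application of the action formula and the surjectivity of $f$, so I do not anticipate a real obstacle.
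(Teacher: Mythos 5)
Your proof is correct, but there is nothing in the paper to compare it against: the lemma is stated without proof, and in Section 4 it is simply recalled as known (``It is known that the orbits of an Alexander quandle $X$ are the cosets of $\im(1-T)$''). Your argument fills this gap using exactly the tools the paper has available at that point, namely the action formula $x\cdot g = T^{\varepsilon(g)}x+(1-T)\omega(g)$ and the surjectivity of $f$. Both inclusions check out, including the one delicate point you flag: $T^k-1$ is divisible by $1-T$ in $\Z[T,T^{-1}]$ also for $k<0$ (this case is genuinely needed, since $\varepsilon(g)$ can be negative), and then $(T^{\varepsilon(g)}-1)x=(1-T)(vx)\in\im(1-T)$ by commutativity of that ring acting on $M$; likewise your explicit witness $g=e_0^{-1}e_z$, with $f(g)=(0,z)$ coming from the paper's surjectivity computation, settles the reverse inclusion. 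For what it is worth, the forward inclusion admits a slightly shorter route that avoids the Laurent-polynomial discussion entirely: the orbit equivalence is the equivalence relation generated by the pairs $(x,\,x\lhd b)$, and $x\lhd b-x=(1-T)(b-x)\in\im(1-T)$, so since ``lying in the same coset'' is itself an equivalence relation containing these pairs, every orbit lies in a single coset. But your uniform use of the action formula is equally valid and fits well with how the paper organizes Section 2.
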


\begin{cor}
Let $X=\Al(M,T)$.
We have the following equivalence:
\[
X \text{ connected } \iff (1-T):M \to M \text{ is surjective.}
\]
\end{cor}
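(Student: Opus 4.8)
The plan is to deduce this equivalence directly from the preceding lemma, which identifies the orbits of $X = \Al(M,T)$ with the cosets of $\im(1-T)$ in $M$; all the genuine work has already been packaged into that lemma, so the corollary should reduce to unwinding definitions and a one-line chain of equivalences.

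First I would recall the definition of connectedness stated in the introduction: $X$ is connected precisely when the action of $\As(X)$ on $X$ by right translations is transitive, i.e.\ when $X$ consists of a single orbit. Hence connectedness is equivalent to the assertion that the orbit partition of $M$ has exactly one block. Next I would invoke the lemma to replace ``orbit'' by ``coset of $\im(1-T)$'': the blocks of the partition are exactly the cosets of the subgroup $\im(1-T) \le M$, so that the orbit set is the quotient $M/\im(1-T)$. The set $M$ is a single such coset if and only if $\im(1-T) = M$, and this last condition is by definition the surjectivity of the endomorphism $1-T \colon M \to M$. Chaining
\[
X \text{ connected} \iff M/\im(1-T) \text{ is trivial} \iff \im(1-T)=M \iff (1-T)\ \text{surjective}
\]
then yields the stated equivalence.

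The main obstacle does not lie in this formal argument but rather in the lemma on which it rests, namely the explicit description of the orbits as cosets of $\im(1-T)$; once that identification is granted, the corollary follows immediately with no further computation. I would therefore keep the proof to the short deduction above and make clear that its substance is inherited from the lemma.
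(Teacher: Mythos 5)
Your proposal is correct and matches the paper's approach exactly: the paper states this corollary immediately after the lemma identifying the orbits with the cosets of $\im(1-T)$, leaving the proof implicit precisely because it is the one-line deduction you give (connected $\iff$ one orbit $\iff$ $\im(1-T)=M$ $\iff$ $1-T$ surjective). Nothing is missing; the substance indeed resides in the lemma, as you note.
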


\begin{prop}
Let $X$ be a connected Alexander quandle. Then
\[
K(X) \cong H_2^Q(X) \cong \Stab_{\As(X)}(0)\cap \Ker(\varepsilon).
\]
\end{prop}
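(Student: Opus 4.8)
The plan is to prove the two isomorphisms separately, observing that the genuinely new content is the identification of $K(X)$ with the stabilizer-kernel: the homological half is a one-line specialization of the corollary preceding this proposition. Throughout I would work with the decomposition $f = \varepsilon \times \omega$ and freely use that the extension \eqref{extension} is central, so that $K(X)$ is abelian.

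First I would dispose of the isomorphism $H_2^Q(X) \cong \Stab_{\As(X)}(0) \cap \Ker(\varepsilon)$. Since $X$ is connected, the action of $\As(X)$ has a single orbit, so the number of orbits is $m = 1$. Substituting $m = 1$ into the corollary $H_2^Q(X) \cong \left(\Stab_{\As(X)}(0) \cap \Ker(\varepsilon)\right)^m$ yields this isomorphism immediately, with no further work. Everything then reduces to proving the equality
\[
K(X) = \Stab_{\As(X)}(0) \cap \Ker(\varepsilon).
\]

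Next I would unwind both sides into conditions on $\varepsilon$ and $\omega$. Because $f = \varepsilon \times \omega$, an element $g$ lies in $K(X) = \Ker(f)$ exactly when $\varepsilon(g) = 0$ and $\omega(g) = 0$, so $K(X) = \Ker(\varepsilon) \cap \Ker(\omega)$. On the other hand, \cref{stabker} identifies $\Stab_{\As(X)}(0) \cap \Ker(\varepsilon)$ with $\omega^{-1}(\Ker(1-T)) \cap \Ker(\varepsilon)$. Since $0 \in \Ker(1-T)$, the inclusion $K(X) \subseteq \Stab_{\As(X)}(0) \cap \Ker(\varepsilon)$ is automatic and holds for every Alexander quandle. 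The proposition therefore hinges on the reverse inclusion, which follows as soon as $\omega^{-1}(\Ker(1-T)) = \Ker(\omega)$, that is, as soon as $\Ker(1-T) = 0$.

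The main obstacle is thus to extract the injectivity of $1-T$ from connectedness. By the corollary characterizing connectedness, $X$ connected means $1-T \colon M \to M$ is surjective. For the quandles of interest this already forces injectivity: when $M$ is finite — in particular in the linear case $M = \znz{n}$ — a surjective group endomorphism is bijective, so $\Ker(1-T) = 0$. Feeding this back gives $\omega^{-1}(\Ker(1-T)) = \omega^{-1}(\{0\}) = \Ker(\omega)$, whence $\Stab_{\As(X)}(0) \cap \Ker(\varepsilon) = \Ker(\varepsilon) \cap \Ker(\omega) = K(X)$. Combining this equality with the homological identification of the second paragraph produces the chain $K(X) \cong H_2^Q(X) \cong \Stab_{\As(X)}(0) \cap \Ker(\varepsilon)$, completing the argument.
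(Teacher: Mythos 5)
Your proof is, in substance, the paper's proof: both identify $K(X)=\Ker(f)=\Ker(\varepsilon)\cap\Ker(\omega)$, invoke \cref{stabker} to rewrite $\Stab_{\As(X)}(0)\cap\Ker(\varepsilon)$ as $\omega^{-1}(\Ker(1-T))\cap\Ker(\varepsilon)$, reduce everything to the vanishing of $\Ker(1-T)$, and obtain the homological isomorphism by setting $m=1$ in the corollary to Eisermann's formula. The one divergence is that the paper performs the key step silently---its proof jumps from $\omega^{-1}(\Ker(1-T))\cap\Ker(\varepsilon)$ to $\omega^{-1}(0)\cap\Ker(\varepsilon)$ with no justification---whereas you isolate exactly what is needed, namely injectivity of $1-T$, and then derive it from connectedness \emph{under the added assumption that $M$ is finite}.

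That assumption is not among the hypotheses, and your instinct that this is the ``main obstacle'' is correct: it cannot be dropped from this argument. For infinite $M$, surjectivity of $1-T$ does not imply injectivity: on the Pr\"ufer group $M=\Z[1/p]/\Z$, multiplication by $T=1-p$ is an automorphism while $1-T$ is multiplication by $p$, which is onto with kernel of order $p$. For that connected quandle the inclusion $K(X)\subseteq\Stab_{\As(X)}(0)\cap\Ker(\varepsilon)$ is strict: since $\omega$ maps $\Ker(\varepsilon)$ onto $M$ (e.g.\ $\omega(e_0^{-1}e_x)=x$), the quotient of the two groups is $\Ker(1-T)\neq 0$. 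In fact one checks from Section~\ref{sec:3} that the statement itself breaks there: every value of $\varphi$ is a value of $\varphi_0$ (by the formula in the proof of \cref{equivariancedegre1}), every value of $\varphi_0$ is a value of $\lambda$ (by \eqref{lienlambdaphi} and surjectivity of $1-T$), and $\lambda$, being bi-additive, factors through $M\otimes_{\Z}M=0$; hence $K(X)=\left<\im(\varphi)\right>=0$ while $\Stab_{\As(X)}(0)\cap\Ker(\varepsilon)\cong\znz{p}$, so the claimed chain of isomorphisms cannot hold. In short, your proof is correct exactly on the scope you claim (finite, or more generally finitely generated, $M$, where surjective endomorphisms are injective); the restriction you flagged is a genuine gap in the proposition's stated generality rather than a defect of your write-up, and since the paper only ever applies this result to linear, hence finite, quandles, nothing downstream is affected.
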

\begin{proof}
We have
\[\begin{array}{ccl}
\Stab_{\As(X)}(0)\cap \Ker(\varepsilon) & \overset{Prop. \ref{stabker}}{=} &\omega^{-1}( \Ker(1-T))\cap \Ker(\varepsilon)\\
&= & \omega^{-1}(0)\cap \Ker(\varepsilon)\\
&=& \Ker(\omega)\cap \Ker(\varepsilon) \\
&= &\Ker(f) \\
& = & K(X),
\end{array}\]
hence the result.
\end{proof}

\section{2-cocycle associated to the extension}\label{sec:3}
\subsection{Properties of the 2-cocycle}
Let  $X=\Al(M,T)$.
We can define a map of sets, which is a section of $f$, by
\[\begin{array}{cccc}
s\colon & \Z \ltimes M & \longrightarrow & \As(X),\\
 & (k,a) & \longmapsto & e_0^{k-1} e_a.
\end{array}\]

One studies the defect of ``morphicity'' of a section by defining a new map as follows:
\[\begin{array}{cccc}
\varphi\colon &( \Z \ltimes M) \times (\Z \ltimes M) & \longrightarrow & K(X), \\
 & \left( (k, a), (m, b)\right) & \longmapsto & s(k, a) s(m,b) s((k, a)(m, b))^{-1}.
\end{array}\]
We can give an expression in terms of the generators of $\As(X)$:

\begin{equation}\label{defphi}
\varphi((k, a),(m, b)) = \underbrace{e_0^{k-1} e_a}_{s(k, a)} \underbrace{e_0^{m-1} e_b}_{s(m, b)} \underbrace{e^{-1}_{T^ma+b} e_0^{1-k-m}}_{s((k, a)(m, b))^{-1} }. 
\end{equation}
This map turns out to be a group $2$-cocycle, that satisfies properties of symmetry and equivariance.

\begin{thm}\label{thm:propphi}
Let $X=\Al(M,T)$.
The map $\varphi$ satisfies the following properties for all $\alpha = (k, a)$, $\beta=(m, b)$ and $\gamma = (p, c)\in \Z\ltimes M$:
\begin{align}
&\varphi(\beta,\gamma) - \varphi(\alpha \beta, \gamma) + \varphi(\alpha,\beta\gamma) - \varphi(\alpha, \beta)= 0, \label{conditioncocycle}&\\
& \varphi(\alpha,(m,0))= 0, \label{conditionnormalisationd} & \\
& \varphi((k,0),\beta)= 0, \label{conditionnormalisationg} & \\
& \varphi((k,a), (m,b)) = \varphi\left((k,T a), (m,T b)\right), &\label{invariance} \\
& \varphi\left( (k,a), (m,b) \right) = \varphi\left( (1,a),(1, T^{1-m}b) \right),\\
& \varphi\left( (k,a), (m,b) \right) = \varphi\left( (m, T^{1-k} b), (k, T^m a + (1-T) b) \right). &
\end{align}
\end{thm}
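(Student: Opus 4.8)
The plan is to exploit that, by the preceding proposition, the extension \eqref{extension} is central, so $\varphi$ takes values in the \emph{abelian} central subgroup $K(X)=\Ker(f)$; this is what lets me write the target additively and move $\varphi$-values freely past other elements. With this in hand, \eqref{conditioncocycle} becomes the usual factor-set identity: I would expand the triple product $s(\alpha)s(\beta)s(\gamma)$ in two ways, first associating as $(\alpha\beta)\gamma$ and then as $\alpha(\beta\gamma)$, each time replacing a product $s(\mu)s(\nu)$ by $\varphi(\mu,\nu)\,s(\mu\nu)$. Centrality of $K(X)$ lets me collect both $\varphi$-factors on the left in each expansion; comparing the two results and cancelling the common $s(\alpha\beta\gamma)$ yields $\varphi(\alpha,\beta)+\varphi(\alpha\beta,\gamma)=\varphi(\beta,\gamma)+\varphi(\alpha,\beta\gamma)$, which is exactly \eqref{conditioncocycle}.

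The workhorse for the remaining identities is the rewriting rule $e_a e_0^{\,j}=e_0^{\,j}e_{T^{j}a}$, which is the instance of \cref{lem:conjg} with $g=e_0^{\,j}$, since $a\cdot e_0^{\,j}=T^{j}a$. The two normalisation identities then reduce to checking that the relevant products of sections are again sections. For \eqref{conditionnormalisationg}, $s(k,0)s(m,b)=e_0^{k}e_0^{m-1}e_b=e_0^{k+m-1}e_b=s\bigl((k,0)(m,b)\bigr)$, so the defect vanishes; for \eqref{conditionnormalisationd}, the rewriting rule gives $s(k,a)s(m,0)=e_0^{k-1}e_a e_0^{m}=e_0^{k+m-1}e_{T^{m}a}=s\bigl((k,a)(m,0)\bigr)$.

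For the $T$-invariance \eqref{invariance}, the conceptual point — and the step I expect to be the real crux — is to recognise $T$ as an \emph{inner} automorphism of $\As(X)$. Indeed the case $j=1$ of the rewriting rule, $e_a e_0=e_0 e_{Ta}$, rearranges to $e_0^{-1}e_a e_0=e_{Ta}$, so conjugation by $e_0$ induces the automorphism $\phi\colon e_a\mapsto e_{Ta}$ lifting the quandle automorphism $T$. A short check gives $\phi(s(k,a))=s(k,Ta)$ and $\phi\bigl(s((k,a)(m,b))\bigr)=s\bigl((k,Ta)(m,Tb)\bigr)$, whence $\phi\bigl(\varphi((k,a),(m,b))\bigr)=\varphi((k,Ta),(m,Tb))$. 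Since $\varphi$ takes values in the central subgroup $K(X)$ and $\phi$ is conjugation by $e_0$, the map $\phi$ fixes $K(X)$ pointwise; therefore $\varphi((k,a),(m,b))=\phi\bigl(\varphi((k,a),(m,b))\bigr)=\varphi((k,Ta),(m,Tb))$.

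The last two identities combine the cocycle and normalisation relations with further rewriting. For the reduction to degree one I would use the factorisations $(k,a)=(k-1,0)(1,a)$ and $(m,b)=(1,T^{1-m}b)(m-1,0)$: substituting the first into \eqref{conditioncocycle} and annihilating the $(k-1,0)$-terms via \eqref{conditionnormalisationg} gives $\varphi((k,a),(m,b))=\varphi((1,a),(m,b))$, and substituting the second and annihilating the $(m-1,0)$-terms via \eqref{conditionnormalisationd} gives $\varphi((1,a),(m,b))=\varphi((1,a),(1,T^{1-m}b))$. For the final symmetry, I first note that both $(k,a)(m,b)$ and $(m,T^{1-k}b)(k,T^{m}a+(1-T)b)$ equal $(k+m,T^{m}a+b)$, so by centrality it suffices to prove $s(k,a)s(m,b)=s(m,T^{1-k}b)s(k,T^{m}a+(1-T)b)$; applying the rewriting rule together with the quandle relation $e_x e_y=e_y e_{x\lhd y}$, I expect both sides to reduce to $e_0^{\,k+m-2}e_b e_{T^{m}a+(1-T)b}$. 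The main obstacle throughout will be the bookkeeping with the $T$-twists, but beyond \eqref{invariance} none of these steps is conceptually difficult.
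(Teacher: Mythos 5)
Your proof is correct. For \eqref{conditioncocycle}, \eqref{conditionnormalisationd}, \eqref{conditionnormalisationg} and \eqref{invariance} it follows essentially the paper's own argument: the factor-set computation resting on centrality of $K(X)$, the braidings $e_a e_0^{j} = e_0^{j} e_{T^{j}a}$ for the two normalisations, and conjugation by $e_0$, which twists the sections by $T$ but fixes the central values of $\varphi$. The genuine difference lies in the last two identities. For the reduction to degree one, the paper proves \cref{equivariancedegre1} by explicit rewriting: pushing every $e_0$ in \eqref{defphi} to the left yields the closed form $\varphi((k,a),(m,b)) = e_0^{-1}\, e_{T^{-k}a}\, e_{T^{-k}(T^{1-m}b)}\, e^{-1}_{T^{-k}(Ta+T^{1-m}b)}$, from which both reduction statements can be read off. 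You obtain the same identity purely formally, by substituting the factorisations $(k,a)=(k-1,0)(1,a)$ and $(m,b)=(1,T^{1-m}b)(m-1,0)$ into \eqref{conditioncocycle} and killing the unwanted terms with the normalisation conditions. This is more abstract and more general --- it is a statement about any central extension with a section normalized along a subgroup, with no quandle-specific computation --- and it even avoids invoking \eqref{invariance}, which the paper needs in order to pass from $\varphi((1,T^{1-k}a),(m,T^{1-k}b))$ to $\varphi((1,a),(m,b))$. What the paper's computation buys in exchange is an explicit word form of $\varphi$ in the generators, in keeping with its rewriting-techniques theme, and this is also what makes the final symmetry immediate there: the paper deduces it from the degree-one braiding $\varphi((1,a),(1,b))=\varphi((1,b),(1,a\lhd b))$ combined with the reduction, whereas you verify directly that the two pairs multiply to the same element $(k+m,\,T^{m}a+b)$ of $\Z\ltimes M$ and that both section products reduce to $e_0^{k+m-2}e_b\, e_{T^{m}a+(1-T)b}$ --- a check which, as you note, needs only the equality of those products (centrality is not actually required at that step), and which does go through.
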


\begin{definition}
The term \emph{braidings} will refer to the rewriting relations $e_a e_b = e_b e_{a\lhd b}$ for all $a,b\in X$.
\end{definition}

\begin{convention}
When it is possible, we use the additive notation when working with elements of $K(X)$.
\end{convention}

\begin{proof}
We use the centrality of $\varphi(\beta,\gamma)$ in $\As(X)$:
\begin{align*}
\varphi(\beta,\gamma) - \varphi(\alpha \beta, \gamma) & + \varphi(\alpha,\beta\gamma) - \varphi(\alpha, \beta)\\
& = \varphi(\beta,\gamma) + \varphi(\alpha,\beta\gamma) - \varphi(\alpha \beta, \gamma)  - \varphi(\alpha, \beta) \\
& = \varphi(\beta,\gamma)  + s(\alpha) s(\beta \gamma) s(\alpha \beta \gamma)^{-1} s(\alpha\beta\gamma)s(\gamma)^{-1} s(\alpha\beta)^{-1} - \varphi(\alpha,\beta) \\
& = \varphi(\beta,\gamma) + s(\alpha) s(\beta\gamma) s(\gamma)^{-1} s(\alpha\beta)^{-1} s(\alpha\beta) s(\beta)^{-1} s(\alpha)^{-1} \\
& = s(\alpha) \varphi(\beta,\gamma) s(\beta \gamma) s(\gamma)^{-1} s(\beta)^{-1} s(\alpha)^{-1} \\
& = s(\alpha) s(\beta) s(\gamma) s(\beta \gamma) s(\beta \gamma)^{-1} s(\gamma)^{-1} s(\beta)^{-1} s(\alpha)^{-1}\\
& = 0.
\end{align*}

The normalisation conditions \eqref{conditionnormalisationd} and \eqref{conditionnormalisationg} are proved by applying braidings involving the generators $e_0$.
For \eqref{conditionnormalisationd}, we have:
\[
\varphi((k,a),(m,0)) = e_0^{k-1} e_0^{m} e_{T^m a} e_{T^m a}^{-1} e_0^{1-k-m} = 0.
\]

The invariance by multiplication by $T$ \eqref{invariance} uses the centrality of $\varphi(\alpha,\beta)$ in $\As(X)$ and braidings of the generators:
\begin{align*}
\varphi((k,a), (m,b)) & = e_0^{-1} e_0 \varphi((k,a), (m,b)) \\
& = e_0^{-1} \varphi((k,a), (m,b)) e_0 & \text{centrality} \\
& = \varphi((k,Ta), (m,Tb)) & s(k,a)e_0 = e_0 s(k,Ta)
\end{align*}

The reduction to degree $1$ follows from the next lemma.

Since $\varphi((1,a),(1,b)) = \varphi((1,b), (1,a\lhd b)$, we get the last point.
\end{proof}

\begin{lem}\label{equivariancedegre1}
For all $(k, a), (m, b)\in \Z\ltimes M$, we have
\begin{itemize}
\item $\varphi((k, a),(m, b)) = \varphi((1, T^{1-k}a),(m, T^{1-k}b)) = \varphi( (1,a), (m,b) )$,
\item $\varphi((k, a),(m, b)) = \varphi((k, a),(1, T^{1-m}b))$.
\end{itemize}
\end{lem}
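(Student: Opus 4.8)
The plan is to establish both equalities in \Cref{equivariancedegre1} by the same technique already used for property \eqref{invariance}: pushing generators $e_0$ through the word defining $\varphi$ using the braiding relations $e_0 e_a = e_a e_{a \lhd 0} = e_a e_{Ta}$ (and its inverse), together with the centrality of the values $\varphi(\alpha,\beta)$ in $\As(X)$. Recall from \eqref{defphi} that $\varphi((k,a),(m,b)) = e_0^{k-1} e_a e_0^{m-1} e_b\, e_{T^m a + b}^{-1} e_0^{1-k-m}$, so the idea is to commute a single $e_0$ or $e_0^{-1}$ across the whole word and reinterpret the result as another value of $\varphi$.

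First I would prove the first bullet. For the equality $\varphi((k,a),(m,b)) = \varphi((1,T^{1-k}a),(m,T^{1-k}b))$, I would iterate the key rewriting identity $s(k,a)\, e_0 = e_0\, s(k,Ta)$ already invoked in the proof of \eqref{invariance}. More precisely, observe that $e_0^{-1} s(k,a) = s(k-1, T^{-1}a) e_0^{-1}$: left-multiplying by $e_0^{-1}$ lowers the degree by one while applying $T^{-1}$ to the weight. Wrapping the whole word in $e_0^{-(k-1)} (\cdots) e_0^{k-1}$ and using centrality to move the conjugating factors harmlessly through the (central) output, the leading block $e_0^{k-1}$ collapses to $e_0^{0}$, turning the first argument's degree $k$ into $1$ and twisting both weights by $T^{1-k}$. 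This yields $\varphi((1,T^{1-k}a),(m,T^{1-k}b))$. The second equality in the first bullet, namely that this in turn equals $\varphi((1,a),(m,b))$, is then immediate from the already-established invariance property \eqref{invariance}, applied $|1-k|$ times to strip the overall power of $T$ from both coordinates simultaneously.

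The second bullet, $\varphi((k,a),(m,b)) = \varphi((k,a),(1,T^{1-m}b))$, is handled symmetrically but requires commuting $e_0$ past the \emph{right} block of the word. Here the relevant move is to insert $e_0^{m-1} e_0^{-(m-1)}$ and push the $e_0^{m-1}$ factor rightward through $e_b\, e_{T^m a + b}^{-1}$ using the braidings, reducing the degree of the second argument from $m$ to $1$ while applying $T^{1-m}$ to its weight $b$; the first argument is untouched because the relevant generators sit to the right of the $e_a$ block. Centrality of the $\varphi$-value again ensures the bookkeeping factors $e_0^{\pm(m-1)}$ cancel cleanly.

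The main obstacle I anticipate is purely computational: keeping the exponents and the powers of $T$ correctly synchronized as generators are commuted past one another, since each braiding move both shifts a degree and applies a power of $T$ to a weight. The conceptual content is light—everything reduces to the two commutation rules $s(k,a) e_0 = e_0 s(k,Ta)$ and $e_x g = g e_{x\cdot g}$ (\Cref{lem:conjg}) plus centrality of $K(X)$—so the care needed is in verifying that the twisted weights match the claimed $T^{1-k}$ and $T^{1-m}$ factors rather than in any genuine difficulty. Once \Cref{equivariancedegre1} is in hand, the ``reduction to degree $1$'' step of \Cref{thm:propphi} follows by combining the two bullets, and the final symmetry property of \Cref{thm:propphi} follows from it together with the observation $\varphi((1,a),(1,b)) = \varphi((1,b),(1,a\lhd b))$ noted at the end of that proof.
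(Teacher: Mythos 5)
Your overall strategy---moving $e_0$'s through the word \eqref{defphi} by braidings and exploiting centrality of the values of $\varphi$---is sound and is essentially what the paper does: the paper performs one global computation, pushing every $e_0$ in \eqref{defphi} to the left to obtain the canonical form $\varphi((k,a),(m,b)) = e_0^{-1}\,e_{T^{-k}a}\,e_{T^{-k}(T^{1-m}b)}\,e^{-1}_{T^{-k}(Ta+T^{1-m}b)}$, from which both bullets are read off (the equality with $\varphi((1,a),(m,b))$ also using \eqref{invariance}). Your treatment of the second bullet is correct: pushing the middle block $e_0^{m-1}$ rightward through $e_b\,e^{-1}_{T^m a+b}$ twists those two weights by $T^{1-m}$ and leaves $e_0^{m-1}e_0^{1-k-m}=e_0^{-k}$, which is exactly the word for $\varphi((k,a),(1,T^{1-m}b))$; centrality is not even needed there.

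However, the ``precise'' step on which you base the first bullet is false. One has $e_0^{-1}s(k,a)=e_0^{k-2}e_a=s(k-1,a)$: left multiplication by $e_0^{-1}$ only lowers the degree, with no twist and no residual factor, whereas your right-hand side $s(k-1,T^{-1}a)\,e_0^{-1}$ equals $s(k-2,T^{-2}a)$, which does not even have the same degree. (The twist comes from multiplication on the \emph{right}: $s(k,a)e_0^{-1}=s(k-1,T^{-1}a)$, equivalently $s(k,a)e_0=e_0\,s(k,Ta)$.) As a result, the effect you attribute to wrapping in $e_0^{-(k-1)}(\cdots)e_0^{k-1}$ is not what that computation gives: collapsing the leading block yields $e_0^{-(k-1)}s(k,a)=s(1,a)$ and $s(k+m,T^ma+b)^{-1}e_0^{k-1}=s(m+1,T^ma+b)^{-1}=s((1,a)(m,b))^{-1}$, hence $e_0^{-(k-1)}\varphi((k,a),(m,b))\,e_0^{k-1}=\varphi((1,a),(m,b))$ with \emph{no} twist, and centrality removes the conjugation, giving $\varphi((k,a),(m,b))=\varphi((1,a),(m,b))$ directly. (Pushing $e_0^{k-1}$ through each $s$-factor instead gives a degree-preserving twist of both weights, which is exactly \eqref{invariance}; no single conjugation reduces the degree and produces the factor $T^{1-k}$ at the same time.) The twisted form $\varphi((1,T^{1-k}a),(m,T^{1-k}b))$ then follows by applying \eqref{invariance} $|k-1|$ times. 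So your two deductions must be run in the opposite order from the one you wrote: the conjugation-plus-centrality step proves the \emph{untwisted} equality, and invariance supplies the \emph{twisted} one. With the false identity deleted and the roles swapped, your argument is correct and coincides with the paper's.
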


\begin{proof}
We only need to push in \eqref{defphi} every generator $e_0$ to the left.
For all $(k,a), (m,b)\in \Z\ltimes X$,
\[\varphi((k,a),(m,b)) = e_0^{-1} e_{T^{-k}a} e_{T^{-k} (T^{1-m} b)} e^{-1}_{T^{-k} (Ta + T^{1-m} b)}. \qedhere
\]
\end{proof}

The group $2$-cocycle $\varphi$ generates the whole kernel of the extension:
\begin{prop}
We have
\[
K(\Al(M,T)) = \left< \im(\varphi) \right> .
\]
\end{prop}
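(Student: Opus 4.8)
The inclusion $\left<\im(\varphi)\right> \subseteq K(X)$ is immediate, since $\varphi$ takes values in $K(X)$ by construction. The plan is to prove the reverse inclusion by quotienting out $\left<\im(\varphi)\right>$ and showing that the section $s$ then splits the extension. First I would observe that $\left<\im(\varphi)\right>$ is a normal subgroup of $\As(X)$: the extension \eqref{extension} is central, so $\im(\varphi)\subseteq K(X)\subseteq Z(\As(X))$, and any subgroup of the center is normal.

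Write $\pi\colon \As(X) \to \bar G := \As(X)/\left<\im(\varphi)\right>$ for the quotient map. The heart of the argument is that $\bar s := \pi\circ s$ is a group homomorphism. This is precisely the reason $\varphi$ was introduced: by the definition \eqref{defphi} one has $s(\alpha)s(\beta) = \varphi(\alpha,\beta)\, s(\alpha\beta)$, and applying $\pi$ kills the factor $\varphi(\alpha,\beta)$, which gives $\bar s(\alpha)\bar s(\beta) = \bar s(\alpha\beta)$ for all $\alpha,\beta\in\Z\ltimes M$.

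Next I would compare $\bar s$ with the map $\bar f\colon \bar G \to \Z\ltimes M$ induced by $f$, which is well defined since $\left<\im(\varphi)\right>\subseteq K(X)=\Ker(f)$. A direct computation shows $f\circ s = \id_{\Z\ltimes M}$; for instance $f(s(k,a)) = (1,0)^{k-1}(1,a) = (k,a)$. Hence $\bar f\circ\bar s = \id$, so $\bar s$ is injective. On the other hand $\bar s$ is surjective: the generators $e_a = s(1,a)$ generate $\As(X)$, so their images $\bar s(1,a)$ generate $\bar G$, and these already lie in the subgroup $\im(\bar s)$. Therefore $\bar s$ is an isomorphism with inverse $\bar f$, so $\Ker(\bar f) = K(X)/\left<\im(\varphi)\right>$ is trivial, that is, $K(X) = \left<\im(\varphi)\right>$.

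The only genuinely delicate point is the normality of $\left<\im(\varphi)\right>$, which is what makes the quotient $\bar G$ and the induced map $\bar f$ available; this is where the centrality of the extension, established earlier, is essential. Everything else reduces to the defining identity $s(\alpha)s(\beta)=\varphi(\alpha,\beta)s(\alpha\beta)$ together with the facts that $s$ is a set-theoretic section of $f$ and that its values $s(1,a)=e_a$ are exactly the generators of $\As(X)$.
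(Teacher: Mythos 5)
Your proof is correct, but it takes a genuinely different route from the paper. The paper argues by explicit rewriting: it takes $g\in K(X)$, writes it (using $\varepsilon(g)=0$ and braidings) as $g^+(g^-)^{-1}$ with $g^\pm$ positive words of equal length, and then repeatedly applies the relation $e_a e_b = e_0\, e_{Ta+b}\,\varphi((1,a),(1,b))$ to push each word into the form $e_0^{k-1}e_{\omega(g^\pm)}$ times a product of central cocycle values; the condition $\omega(g)=0$ forces $\omega(g^+)=\omega(g^-)$, so the non-cocycle parts cancel and $g$ is exhibited concretely as a product of elements of $\im(\varphi)$. Your argument is structural instead: since $\im(\varphi)\subseteq K(X)\subseteq Z(\As(X))$, the subgroup $N=\left<\im(\varphi)\right>$ is normal; modulo $N$ the identity $s(\alpha)s(\beta)=\varphi(\alpha,\beta)s(\alpha\beta)$ makes $\bar s=\pi\circ s$ a homomorphism; $f\circ s=\id$ gives injectivity of $\bar s$, and surjectivity follows because $s(1,a)=e_a$ exhausts the generators, so $\bar s$ is an isomorphism and $K(X)/N=\Ker(\bar f)$ is trivial. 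All steps check out (including the two points that genuinely need verification: normality via centrality, and the fact that $\im(\bar s)$ is a subgroup containing a generating set). What each approach buys: yours is shorter, cleaner, and actually proves a general fact — for any central extension with a set-theoretic section whose image generates the middle group, the kernel is generated by the values of the associated $2$-cocycle — whereas the paper's proof is constructive, producing an explicit factorization $g=\phi(x_1,\ldots,x_k)\phi(y_1,\ldots,y_k)^{-1}$ of a kernel element into cocycle values, in the same rewriting style that the paper later relies on for the normal form theorem in Section~\ref{sec:4}.
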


\begin{proof}
Let $g\in K(\Al(M,T))$.
Since $\varepsilon(g)=0$, after applying the braidings, one can write $g$ in the form
\[
g= \underbrace{e_{x_1}\ldots e_{x_k}}_{g^+} ( \underbrace{e_{y_1} \ldots e_{y_k}}_{g^-} )^{-1}.
\]
We will apply the following relations:
\[
\forall a,b\in M, \qquad e_a e_b = e_0 e_{Ta+b} \varphi((1,a),(1,b)).
\]
Denote

\[
\begin{array}{ll}
\phi(x_1, x_2, \ldots,x_k):= & \varphi((1,x_1),(1,x_2)) \varphi((1,Tx_1+x_2),(1,x_3)) \ldots\\
& \ldots \varphi( (1,\omega(e_{x_1}\ldots e_{x_{k-1}})), (1,x_k))\in \left<\im(\varphi)\right>.
\end{array}
\]

We get
\[
g=e_0^{k-1} e_{\omega(g^+)} \phi(x_1,\ldots, x_k) \left(e_0^{k-1} e_{\omega(g^-)} \phi(y_1,\ldots,y_k)\right)^{-1}.
\]
Since $\varphi(\alpha,\beta)$ is central for all $\alpha,\beta\in \Z\ltimes M$, we have
\[
g = e_0^{k-1} e_{\omega(g^+)} e^{-1}_{\omega(g^-)} e_0^{1-k} \phi(x_1,\ldots, x_k) \phi(y_1,\ldots,y_k)^{-1}.
\]
But $\omega(g) = 0$, so from the last equality, $\omega(g^+)=\omega(g^-)$.
Thus,
\[
g= \phi(x_1,\ldots, x_k) \phi(y_1,\ldots,y_k)^{-1} \in \left<\im(\varphi)\right>. \qedhere
\]
\end{proof}

We next explore how the section $s$ behaves with respect to the conjugation in $\Z\ltimes M$.

\begin{lem}
For $\alpha=(k,a), \gamma=(p,c)\in \Z\ltimes M$, we have
\[
s\left (\gamma^{-1} \alpha \gamma\right ) = s\left (k, T^p a + (1-T^k)c \right ).
\]
\end{lem}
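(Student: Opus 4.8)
The plan is to observe that the right-hand side is nothing but the explicit value of the conjugate $\gamma^{-1}\alpha\gamma$ computed inside the semidirect product $\Z\ltimes M$, so the statement reduces to feeding an already-known identity into the section $s$. Concretely, I would first recall the conjugation formula established at the start of this section: for $\alpha=(k,a)$ and $\beta=(m,b)$ one has $\beta^{-1}\alpha\beta=(k,T^m a+(1-T^k)b)$. Specializing $\beta$ to $\gamma=(p,c)$ yields the identity $\gamma^{-1}\alpha\gamma=(k,T^p a+(1-T^k)c)$ in $\Z\ltimes M$; note in particular that the first coordinate is preserved, since $\varepsilon$ is a morphism onto the abelian group $\Z$.

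With that identity in hand, the lemma follows at once. Since $s$ is a well-defined map of sets, feeding it the two equal inputs $\gamma^{-1}\alpha\gamma$ and $(k,T^p a+(1-T^k)c)$ produces equal outputs. Unwinding the definition $s(k,a)=e_0^{k-1}e_a$, both sides are literally the same word $e_0^{k-1}e_{T^p a+(1-T^k)c}$ in $\As(X)$, so there is nothing further to verify.

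The only point I would flag for the reader, and the reason the statement is worth recording despite its brevity, is that the entire computation takes place on the level of $\Z\ltimes M$: we are evaluating $s$ at the conjugate $\gamma^{-1}\alpha\gamma$, not comparing it with the conjugate $s(\gamma)^{-1}s(\alpha)s(\gamma)$ of the lift taken inside $\As(X)$. The latter would differ from $s(\gamma^{-1}\alpha\gamma)$ by a correction term lying in the kernel $K(X)$ and governed by the $2$-cocycle $\varphi$, precisely because $s$ is merely a set-theoretic section rather than a group morphism. Keeping that distinction in mind, the present statement presents no real obstacle; it is an immediate corollary of the semidirect-product conjugation formula, and I would write it up in a single line of substitution.
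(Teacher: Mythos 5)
Your proposal is correct and matches the paper's intent exactly: the paper gives no proof for this lemma, treating it as an immediate consequence of the conjugation formula $\beta^{-1}\alpha\beta = (k, T^m a + (1-T^k)b)$ established in the first lemma of Section~\ref{sec:2}, which is precisely the substitution you perform. Your closing remark distinguishing $s(\gamma^{-1}\alpha\gamma)$ from $s(\gamma)^{-1}s(\alpha)s(\gamma)$ is also well placed, since that comparison is exactly the content of the lemma that follows in the paper.
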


In degree $0$ or $1$, this leads to:
\begin{itemize}
\item If $k=p=0$, $s\left (\gamma^{-1} \alpha \gamma\right ) = s\left (0, a \right ) = s(\alpha).$
\item If $k=p=1$, $s\left (\gamma^{-1} \alpha \gamma\right ) = s\left (1, T a + (1-T)c \right ) = s\left (1, a\lhd c\right ).$
\end{itemize}

Regarding this compatibility with the conjugation, the section $s$ turns out to be not that far from being a quandles morphism.
\begin{lem}
Let $\alpha=(1,a),\gamma=(p,c)\in \Z\ltimes M$.
Then,
\[
s\left (\gamma^{-1} \alpha \gamma \right ) = s \left (\gamma \right )^{-1} s(\alpha ) s(\gamma).
\]
\end{lem}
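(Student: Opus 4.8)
The plan is to reduce both sides to a single quandle generator and check that they coincide. The crucial observation is that $\alpha=(1,a)$ has degree $1$, so its image under the section is a single generator: $s(\alpha) = s(1,a) = e_a$. This is precisely the situation that \cref{lem:conjg} is designed to handle, which is why the hypothesis $\alpha=(1,a)$ (rather than an arbitrary element) appears.

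First I would treat the right-hand side. Applying \cref{lem:conjg} with $x=a$ and $g=s(\gamma)$ gives $e_a\, s(\gamma) = s(\gamma)\, e_{a\cdot s(\gamma)}$, hence
\[
s(\gamma)^{-1} s(\alpha) s(\gamma) = s(\gamma)^{-1} e_a\, s(\gamma) = e_{a\cdot s(\gamma)}.
\]
To evaluate $a\cdot s(\gamma)$, I would invoke the explicit action formula $x\cdot g = T^{\varepsilon(g)}x + (1-T)\omega(g)$. Since $s$ is a section of $f=\varepsilon\times\omega$, we have $\varepsilon(s(\gamma)) = p$ and $\omega(s(\gamma)) = c$, so $a\cdot s(\gamma) = T^p a + (1-T)c$. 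Thus the right-hand side equals $e_{T^p a + (1-T)c}$.

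For the left-hand side, I would specialise the preceding lemma on the behaviour of $s$ under conjugation to the case $k=1$, which yields $s(\gamma^{-1}\alpha\gamma) = s\bigl(1,\, T^p a + (1-T)c\bigr) = e_{T^p a + (1-T)c}$. The two sides therefore agree. There is no genuine obstacle here: the whole content is the matching of the degree-$1$ specialisation of the conjugation lemma with the quandle-action computation, and the key to making \cref{lem:conjg} directly applicable is recognising that the degree-$1$ hypothesis collapses $s(\alpha)$ to a single generator $e_a$.
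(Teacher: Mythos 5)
Your proof is correct, and it reaches the same intermediate object as the paper --- both sides reduce to the single generator $e_{T^p a + (1-T)c}$ --- but it gets there by a genuinely different route. The paper's proof is a bare-hands rewriting computation: it expands $s(\gamma)^{-1}s(\alpha)s(\gamma) = e_c^{-1}e_0^{1-p}e_a e_0^{p-1}e_c$, pushes $e_a$ through $e_0^{p-1}$ by iterated braidings to get $e_c^{-1}e_{T^{p-1}a}e_c$, applies one final braiding to obtain $e_{T^{p-1}a\lhd c} = e_{T^p a+(1-T)c}$, and identifies this with $s(\gamma^{-1}\alpha\gamma)$. You instead perform the conjugation in one stroke via \cref{lem:conjg}, which gives $s(\gamma)^{-1}e_a s(\gamma) = e_{a\cdot s(\gamma)}$, and then evaluate the action with the formula $x\cdot g = T^{\varepsilon(g)}x+(1-T)\omega(g)$, using $f\circ s = \id$ to read off $\varepsilon(s(\gamma)) = p$ and $\omega(s(\gamma)) = c$; the left-hand side you handle by specialising the lemma preceding the statement at $k=1$. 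What your version buys is modularity and transparency: it isolates why the degree-$1$ hypothesis matters (it is exactly what collapses $s(\alpha)$ to a single generator so that \cref{lem:conjg} applies) and it avoids all word combinatorics. What the paper's version buys is self-containedness: it uses nothing beyond the defining braiding relations $e_x e_y = e_y e_{x\lhd y}$ --- indeed, its middle steps amount to an unrolled proof of precisely the instance of \cref{lem:conjg} that you invoke, specialised to $g = e_0^{p-1}e_c$ --- so the two arguments differ in packaging and level of abstraction rather than in the underlying computation.
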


\begin{proof}
We check
\begin{align*}
s \left (\gamma \right )^{-1} s(\alpha ) s(\gamma) & = e_c^{-1} e_0^{1-p} e_a e_0^{p-1} e_c\\
& = e_c^{-1} e_{T^{p-1} a} e_c \\
& = e_{T^{p-1} a\lhd c} \\
& = e_{T^p a +(1-T)c} \\
& = s \left ( \gamma^{-1} \alpha \gamma\right ).\qedhere
\end{align*}
\end{proof}

\subsection{Cocyle restricted to degree 0}
The inclusion $\iota_0\colon \begin{array}{lcl}
M & \longrightarrow & \Z\ltimes M, \\
a & \longmapsto & (0,a).
\end{array} $ allows us to restrict the group $2$-cocycle $\varphi$ to a map
\[
\varphi_0\colon \begin{array}{lcl}
M \times M & \longrightarrow & K(X),\\
(a,b) & \longmapsto & \varphi( (0,a), (0,b)).
\end{array}
\]

Restricting \cref{thm:propphi} to degree $0$, we obtain:
\begin{thm}
Let $X=\Al(M,T)$.
The map $\varphi_0$ satisfies the following properties for all $a,b,c\in M$:
\begin{align}
&\varphi_0(b,c) - \varphi_0(a+b, c) + \varphi_0(a,b+c) - \varphi_0(a, b)= 0, \label{conditioncocycle0}&\\
& \varphi_0(a,0)= 0, & \\
& \varphi_0(0,b)= 0, & \\
& \varphi_0(a, b) = \varphi_0(T a, T b). &
\end{align}
\end{thm}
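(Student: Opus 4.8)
The plan is to obtain all four identities by directly specializing \cref{thm:propphi} to the degree-zero locus, that is, to elements $\alpha=(0,a)$, $\beta=(0,b)$, $\gamma=(0,c)$ of $\Z\ltimes M$. The single observation that drives everything is that the semi-direct product trivializes in degree $0$: since $(0,a)(0,b)=(0,T^0 a+b)=(0,a+b)$, products of degree-zero elements simply add in the second coordinate, with no twist by $T$. Consequently $\alpha\beta=(0,a+b)$ and $\beta\gamma=(0,b+c)$, so every term that arises is of the form $\varphi((0,\cdot),(0,\cdot))=\varphi_0(\cdot,\cdot)$.

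With this identification in hand, each property is a one-line substitution. For the cocycle relation I would insert $\alpha\beta=(0,a+b)$ and $\beta\gamma=(0,b+c)$ into \eqref{conditioncocycle}, which reads off directly as $\varphi_0(b,c)-\varphi_0(a+b,c)+\varphi_0(a,b+c)-\varphi_0(a,b)=0$. The two normalization conditions come from setting $m=0$ in \eqref{conditionnormalisationd} and $k=0$ in \eqref{conditionnormalisationg}, giving $\varphi_0(a,0)=0$ and $\varphi_0(0,b)=0$ respectively. Finally, invariance under $T$ is the restriction of \eqref{invariance} to $k=m=0$, namely $\varphi_0(a,b)=\varphi((0,a),(0,b))=\varphi((0,Ta),(0,Tb))=\varphi_0(Ta,Tb)$.

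There is no genuine obstacle here, since the statement is explicitly presented as a restriction of \cref{thm:propphi}; the only point requiring care is precisely the degree-zero collapse of the semi-direct product. One must check that the potential twist $T^m$ appearing in the multiplication rule $(k,a)(m,b)=(k+m,T^m a+b)$ becomes the identity when $m=0$, so that $\alpha\beta$ really equals $(0,a+b)$ and the general cocycle relation \eqref{conditioncocycle} matches the desired \eqref{conditioncocycle0} on the nose rather than up to a $T$-twist. Once this collapse is recorded, all four identities follow immediately by substitution.
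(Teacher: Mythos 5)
Your proposal is correct and is exactly the paper's argument: the paper gives no separate proof, stating only that the result follows by ``restricting \cref{thm:propphi} to degree $0$'', which is precisely your specialization $\alpha=(0,a)$, $\beta=(0,b)$, $\gamma=(0,c)$ together with the observation that $(0,a)(0,b)=(0,a+b)$ since $T^0=\id$. The care you take with the degree-zero collapse of the semi-direct product is the one point worth recording, and you handle it correctly.
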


\begin{prop}\label{tressage0}
Let $X=\Al(M,T)$.
For all $a,b\in M$, we have
\[
\varphi_0(a,b) = \varphi_0(Tb, a + (1-T) b)
\]
\end{prop}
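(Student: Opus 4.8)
The plan is to verify the identity directly, by rewriting both sides into a common normal form in the spirit of the braiding computations used throughout this section. One may alternatively observe that this is precisely the restriction to degree $0$ (the case $k=m=0$) of the last identity in \cref{thm:propphi}; but since that identity was only sketched, I would give the self-contained rewriting argument.

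First I would expand both sides using the definition \eqref{defphi} with $k=m=0$. On the left,
\[
\varphi_0(a,b) = e_0^{-1} e_a e_0^{-1} e_b e_{a+b}^{-1} e_0,
\]
while on the right, writing $a'=Tb$ and $b'=a+(1-T)b$ and noting that $a'+b' = a+b$,
\[
\varphi_0(Tb, a+(1-T)b) = e_0^{-1} e_{Tb} e_0^{-1} e_{a+(1-T)b} e_{a+b}^{-1} e_0.
\]
Both words share the suffix $e_{a+b}^{-1} e_0$, so it suffices to show that the two prefixes $e_0^{-1} e_a e_0^{-1} e_b$ and $e_0^{-1} e_{Tb} e_0^{-1} e_{a+(1-T)b}$ agree in $\As(X)$.

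The key rewriting rule is that $e_0^{-1}$ commutes past a generator at the cost of a $T^{-1}$-twist: applying \cref{lem:conjg} with $g=e_0^{-1}$ together with the action formula $x\cdot g = T^{\varepsilon(g)}x + (1-T)\omega(g)$ (here $\varepsilon(e_0^{-1})=-1$ and $\omega(e_0^{-1})=0$) yields $e_x e_0^{-1} = e_0^{-1} e_{T^{-1}x}$ for every $x\in M$. I would then apply this rule followed by a single braiding to each prefix. On the left, $e_a e_0^{-1} e_b = e_0^{-1} e_{T^{-1}a} e_b = e_0^{-1} e_b e_{a+(1-T)b}$, the final step being the braiding $e_{T^{-1}a} e_b = e_b e_{T(T^{-1}a)+(1-T)b}$. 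On the right, $e_{Tb} e_0^{-1} e_{a+(1-T)b} = e_0^{-1} e_b e_{a+(1-T)b}$ directly. The two prefixes thus both equal $e_0^{-1} e_b e_{a+(1-T)b}$, and the proposition follows.

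There is no genuine obstacle here: the argument is a short normal-form computation. The only points requiring care are the bookkeeping of the $T$-twists when sliding $e_0^{-1}$ leftwards, and the verification that the discarded suffixes genuinely coincide, which hinges on the identity $Tb + (a+(1-T)b) = a+b$.
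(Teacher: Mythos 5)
Your proof is correct and takes essentially the same route as the paper's: the paper turns $\varphi_0(a,b)$ into $\varphi_0(Tb,a+(1-T)b)$ by exactly the two moves you use (sliding $e_0^{-1}$ past a generator at the cost of a $T^{\pm1}$-twist, plus one braiding), the only organizational difference being that you expand both sides and meet at the common word $e_0^{-1}e_0^{-1}e_b e_{a+(1-T)b}e_{a+b}^{-1}e_0$ rather than rewriting in one direction. One notational nitpick: the common value of your two prefixes is $e_0^{-1}e_0^{-1}e_b e_{a+(1-T)b}$ (both leading factors $e_0^{-1}$ included), not $e_0^{-1}e_b e_{a+(1-T)b}$, though your displayed equations make the intended meaning unambiguous.
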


\begin{proof}
\begin{align*}
\varphi_0(a,b) & = e_0^{-1} e_a e_0^{-1} e_b e_{a+b}^{-1} e_0 \\
& = e_0^{-1} e_0^{-1} e_{T^{-1} a}  e_b e_{a+b}^{-1} e_0 \\
& = e_0^{-1} e_0^{-1} e_b e_{ a + (1-T)b}   e_{a+b}^{-1} e_0 \\
& = e_0^{-1} e_{Tb} e_0^{-1} e_{ a + (1-T)b} e_{a+b}^{-1} e_0. \qedhere
\end{align*}
\end{proof}

In the connected case, Clauwens (\cite{clauwensAdjointGroupAlexander2010}) proved the bilinearity of the inverse of $\varphi_0$.
In the same spirit, we study
\begin{align*}
\lambda(x,y) & = \varphi_0(y,x) - \varphi_0(x,y) \\
& = e_0^{-1} e_y e_0^{-1} e_x e_{x+y}^{-1} e_0 e_0^{-1} e_{x+y} e_y^{-1} e_0 e_x^{-1} e_0 \\
& = \left[ e_0^{-1} e_y, e_0^{-1} e_x \right],
\end{align*}
which will be helpful to explore properties of $\varphi_0$.

\begin{prop}
Let $X=\Al(M,T)$.
For all $u,v,w\in M$, we have
\begin{itemize}
\item $\lambda(u,v) = \varphi_0( (1-T)v, u) = \varphi_0(u, (1-T) T^{-1} v)$
\item $\lambda(u,v) = -\lambda(v,u)$
\item $\lambda(u+v,w) = \lambda(u,w) + \lambda(v,w)$
\item $\lambda(u,v+w) = \lambda(u,v) + \lambda(u,w)$
\item $\lambda(u,u) = 0$
\item $\lambda(Tu,Tv) = \lambda(u,v)$.
\end{itemize}
\end{prop}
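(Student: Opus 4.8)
The plan is to dispatch the three ``formal'' identities immediately and to concentrate the real work on the first identity and on bilinearity, from which the rest follows. Antisymmetry is built into the definition, since $\lambda(u,v)=\varphi_0(v,u)-\varphi_0(u,v)=-\lambda(v,u)$, and this in particular forces $\lambda(u,u)=0$ directly. Likewise $\lambda(Tu,Tv)=\lambda(u,v)$ is read off from the $T$-invariance $\varphi_0(Ta,Tb)=\varphi_0(a,b)$ established in the preceding theorem. The one genuine observation I would record first is that, writing $g_a:=e_0^{-1}e_a$, the explicit formula for $\varphi_0$ rearranges as $\varphi_0(a,b)=g_a g_b g_{a+b}^{-1}$, so that $g_a g_b=\varphi_0(a,b)\,g_{a+b}$ with $\varphi_0(a,b)$ lying in the central subgroup $K(X)$; moreover $\lambda(u,v)=[g_v,g_u]$ by the computation already displayed before the statement.

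For the identity $\lambda(u,v)=\varphi_0((1-T)v,u)$ I would feed the cocycle relation \eqref{conditioncocycle0} the substitution $a=Tv$, $b=(1-T)v$, $c=u$, so that $a+b=v$. This yields
\[
\varphi_0((1-T)v,u)-\varphi_0(v,u)+\varphi_0(Tv,(1-T)v+u)-\varphi_0(Tv,(1-T)v)=0 .
\]
Two of these terms are then rewritten by \cref{tressage0}: applying it to $\varphi_0(u,v)$ gives $\varphi_0(Tv,(1-T)v+u)=\varphi_0(u,v)$, while applying it to the normalised term $\varphi_0(0,v)=0$ gives $\varphi_0(Tv,(1-T)v)=0$. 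What survives is exactly $\varphi_0((1-T)v,u)=\varphi_0(v,u)-\varphi_0(u,v)=\lambda(u,v)$.

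Bilinearity is where the central-commutator structure pays off. Since each $\varphi_0(a,b)$ is central, the relation $g_{u+v}=\varphi_0(u,v)^{-1}g_u g_v$ lets me drop the central factor inside a commutator, and the standard expansion $[x,yz]=[x,y]\,{}^{y}[x,z]$ collapses (all commutators being central, so ${}^{y}[x,z]=[x,z]$) to $[g_w,g_{u+v}]=[g_w,g_u][g_w,g_v]$. Reading this through $\lambda(u,w)=[g_w,g_u]$ and the additive convention on $K(X)$ yields $\lambda(u+v,w)=\lambda(u,w)+\lambda(v,w)$; additivity in the second argument then follows by combining this with antisymmetry.

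The last identity $\lambda(u,v)=\varphi_0(u,(1-T)T^{-1}v)$ is the fiddly one, and I expect it to be the main obstacle, because it places the $\im(1-T)$-element in the \emph{second} slot, which none of the previous relations handle directly; it also forces the logical order above, since its proof consumes bilinearity. My approach is to apply \cref{tressage0} once to the right-hand side: with $a=u$, $b=(1-T)T^{-1}v$ it becomes $\varphi_0((1-T)v,\,u+(1-T)^2T^{-1}v)$, and the first identity converts this into $\lambda\bigl(u+(1-T)^2T^{-1}v,\,v\bigr)$. Bilinearity then splits off the correction term $\lambda((1-T)^2T^{-1}v,v)$, which I would kill by expanding $(1-T)^2T^{-1}=T^{-1}-2+T$ and using $\lambda(v,v)=0$ together with $T$-invariance and antisymmetry, which give $\lambda(T^{-1}v,v)=\lambda(v,Tv)=-\lambda(Tv,v)$; the two surviving terms cancel. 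This leaves precisely $\lambda(u,v)$, completing the proof.
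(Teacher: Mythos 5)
Your proposal is correct, and two of its three substantive parts follow the paper's own argument almost verbatim: the identity $\lambda(u,v)=\varphi_0((1-T)v,u)$ via the cocycle condition with $(a,b,c)=(Tv,(1-T)v,u)$ combined with \cref{tressage0}, and the identity $\lambda(u,v)=\varphi_0(u,(1-T)T^{-1}v)$ via one application of \cref{tressage0} followed by bilinearity (the paper kills the correction term by converting it back through \eqref{lienlambdaphi} and recognizing $\varphi_0(0,(1-T)\beta)=0$, while you expand $(1-T)^2T^{-1}=T^{-1}-2+T$ and cancel using $T$-invariance and antisymmetry; both cancellations are valid). Where you genuinely depart from the paper is bilinearity, which is the heart of the proposition. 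The paper stays entirely inside cocycle calculus: it applies the cocycle identity to elements of $\im(1-T)$, uses \eqref{lienlambdaphi} to obtain $\lambda(a+(1-T)b,c)-\lambda(a+b,c)=\lambda((1-T)b,c)-\lambda(b,c)$, observes that the right-hand side is independent of $a$, specializes $a=-b$, and concludes with the change of variables $(\alpha,\beta)=(a+b,-Tb)$, which covers all of $M\times M$ because $T$ is invertible. You instead exploit the group structure directly: $\lambda(u,w)=[g_w,g_u]$ for $g_a=e_0^{-1}e_a$, the relation $g_ug_v=\varphi_0(u,v)\,g_{u+v}$ with central defect, and the expansion $[x,yz]=[x,y]\,{}^{y}[x,z]$, which collapses because all these commutators lie in the central subgroup $K(X)$. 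Your route is shorter and more conceptual — it is the standard fact that commutators in a group whose commutator subgroup is central define a biadditive pairing — whereas the paper's route never leaves the $2$-cocycle formalism in which the rest of Section 3 is written. One small caution: justify $\lambda(u,u)=0$ straight from the definition, as $\varphi_0(u,u)-\varphi_0(u,u)=0$, rather than as a consequence of antisymmetry; the latter only yields $2\lambda(u,u)=0$, which does not suffice in the presence of $2$-torsion in $K(X)$.
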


We do not suppose that $1-T$ is invertible at this point.

From \cref{tressage0}, we have
\[
\varphi_0(u,v) + \varphi_0((1-T)v,u) = \varphi_0(Tv, u+(1-T) v) + \varphi_0((1-T)v,u).
\]
Using the cocycle condition with $x= Tv, y= (1-T)v$, $z = u$ and \cref{tressage0}, we get
\[
\varphi_0(u,v) + \varphi_0((1-T)v,u) = \varphi_0(v,u) + \varphi_0(Tv, (1-T)v) =  \varphi_0(v,u),
\]
since $\varphi_0(Tv, (1-T)v) = \varphi_0(0,v) = 0$.

Hence, we have
\begin{equation} \label{lienlambdaphi}
\lambda(u,v) = \varphi_0((1-T) v,u).
\end{equation}

The cocycle condition for $\varphi_0$ with $x = (1-T) c, y =(1-T) b$ and $z= a$ leads to
\begin{equation}
\varphi_0((1-T)b, a) + \varphi_0((1-T)c, a + (1-T) b) = \varphi_0 ((1-T) (b+c), a) + \varphi_0 ((1-T) c, (1-T) b).
\end{equation}

Hence, using \eqref{lienlambdaphi} we have
\begin{equation}\label{lambdaintermediaire}
\lambda(a,b) + \lambda(a+ (1-T)b, c) = \lambda(a, b+c) + \lambda((1-T)b, c).
\end{equation}

Let us proceed in two steps.
First, the sum of
\begin{itemize}
\item $\varphi_0(a,b+c) + \varphi_0(b,c) = \varphi_0(a+b,c) + \varphi_0(a,b)$
\item $\varphi_0(c,b+a) + \varphi_0(b,a) = \varphi_0(c+b,a) + \varphi_0(c,b)$
\end{itemize}
gives
\[
\lambda(a+b,c) + \lambda(a,b) = \lambda(a, b+c) + \lambda(b,c).
\]

Then, by substracting this equality from \eqref{lambdaintermediaire}, we get
\begin{equation}
\lambda(a+(1-T)b,c) - \lambda(a+b,c) = \lambda((1-T)b,c) - \lambda(b,c).
\end{equation}

Since the right-hand member does not depend on $a$, it is equal to the left-hand member specified in $a=-b$, so
\[
\lambda(a + (1-T)b,c) - \lambda(a + b,c) = \lambda(-Tb,c) .
\]

Writing $\alpha = a +b$ and $\beta =-Tb$, we get
\[
\lambda(\alpha + \beta,c) = \lambda(\alpha,c) + \lambda(\beta,c)
\]
for all $\alpha,\beta,c\in M$.

We have shown that $\lambda$ is additive in the first coordinate.

Further, since $\lambda(x,y) = \varphi_0(y,x) -\varphi_0(x,y) = -\lambda(y,x)$, we also have the additivity of $\lambda$ in the second coordinate.

The bi-additivity gives another expression of $\lambda$ in terms of $\varphi$:
\begin{align*}
\varphi_0(\alpha, (1-T) \beta) & = \varphi_0((1-T) T \beta, \alpha + (1-T)^2 \beta) \\
& = \lambda(\alpha + (1-T)^2 \beta, T \beta) \\
& = \lambda(\alpha, T\beta) + \lambda((1-T)^2\beta, T\beta) \\
& = \varphi_0( (1-T) T\beta, \alpha) + \varphi_0( (1-T)T\beta, (1-T)^2\beta) \\
& = \varphi_0( (1-T) T\beta, \alpha) + \underbrace{\varphi_0(0, (1-T)\beta )}_{=0} \\
& = \lambda(\alpha, T\beta)\\
& = -\lambda(T\beta, \alpha)\\
& = -\varphi_0((1-T)\alpha, T\beta).
\end{align*}

\section{Linear Alexander quandles}\label{sec:4}
\begin{definition}
Quandles of the form $\Al(\znz{n}, T)$ are called \emph{linear Alexander quandles}.
\end{definition}

In this section, let $X = \Al(\znz{n}, T)$.
It is known that the orbits of an Alexander quandle $X$ are the cosets of $\im(1-T)$.
Since $\im(1-T)$ is a subgroup of $\znz{n}$, it is cyclic and hence there exists a unique divisor $q\in \N$ of $n$ such that $\im(1-T) = q\znz{n}$.
But $\left.\raisebox{.2em}{$\znz{n}$}\middle/\raisebox{-.2em}{$q \znz{n}$}\right. \cong \znz{q}$, so we get $q = m$: the number of orbits.

Denote by 
\[
\begin{array}{cccc}
Ab:&\As(X) &\to & \As(X)_{Ab} \\
& e_x & \mapsto & [e_x]
\end{array}
\]
the abelianization.
For $x\in X$ and $g\in \As(X)$, the relation $g^{-1} e_x g = e_{x \cdot g}$ implies $[e_x] = [e_{x\cdot g}]$, and $\As(X)_{Ab} \cong \Z^m$.

\subsection{Normal form}
We now prove our main theorem.
It will give us a normal form for the elements of the structure group of a linear Alexander quandle.

\begin{thm}\label{thm:normalform}
Let $X$ be a linear Alexander quandle over $\znz{n}$.
The following map is an injective group morphism:
$$u\colon\begin{array}{ccc}
\As(X) & \longrightarrow & \Z^m \ltimes \znz{n} \\
g & \longmapsto & (Ab(g), \omega(g))
\end{array},$$
where $m$ is the number of orbits of $X$, $Ab\colon \As(X) \to \Z^m$ the abelianization and $\omega$ the weight map.
\end{thm}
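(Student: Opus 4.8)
The plan is to turn the abstract statement into an explicit isomorphism $\As(X) \cong \Z \ltimes F$ with $F$ abelian, and then verify injectivity of $u$ one coordinate at a time. Throughout I would work in the generators $t := e_0$ and $f_a := e_0^{-1} e_a$, so that $e_a = t f_a$, $f_0 = 1$, $\varepsilon(f_a) = 0$ and $\omega(f_a) = a$. First I would fix the group law on the target: since each $e_a$ maps to a standard basis vector under $Ab$ and to $1$ under $\varepsilon$, the degree factors through the abelianization as $\varepsilon(g) = |Ab(g)|$, the sum of coordinates, so the relevant action of $\Z^m$ on $\znz n$ is $v \cdot c = T^{|v|} c$ and the product is $(v,a)(w,b) = (v+w, T^{|w|} a + b)$. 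With this, combining that $Ab$ is a morphism into the \emph{abelian} group $\Z^m$ with the $1$-cocycle relation \eqref{eq:omega_1cocycle} for $\omega$ gives $u(gh) = u(g) u(h)$ immediately; so $u$ is a morphism. As $Ab(g) = 0$ forces $\varepsilon(g) = 0$, one has $\Ker(u) \subseteq \Ker(\varepsilon) \cap \Ker(\omega) = K(X)$, and on the central subgroup $K(X)$ the map $u$ is just $Ab$; thus injectivity of $u$ amounts to $Ab$ being injective on $K(X)$. I would keep this reduction as a guide but carry the argument out through rewriting.

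The key structural step, and the one I expect to be the \emph{main obstacle}, is showing that the $f_a$ commute. In the generators $t, f_a$ the relation $e_a e_0 = e_0 e_{Ta}$ becomes $f_a t = t f_{Ta}$, i.e. conjugation by $t$ realises $T$; the remaining braidings rewrite to relations among the $f$'s. The crucial point is that $[f_b, f_a] = \lambda(a,b)$, which the earlier sections show is \emph{bi-additive and alternating}. Because $\znz n$ is cyclic, any bi-additive map is determined by its value on $(1,1)$, and alternation forces $\lambda(1,1) = 0$; hence $\lambda \equiv 0$ and all the $f_a$ commute. This is precisely where cyclicity of the coefficient group is indispensable: over a non-cyclic $M$ the commutators need not vanish. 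Granting it, $F := \langle f_a : a \in \znz n \rangle$ is abelian, it is normal (being abelian and $t$-conjugation invariant), it meets $\langle t\rangle$ trivially (by the degree), and $e_a = t f_a$; therefore $\As(X) \cong \Z \ltimes F$ with $t$ acting by $T$.

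It then remains to identify $F$ and check injectivity on $\Z \ltimes F$. Rewriting the braiding $e_a e_b = e_b e_{a\lhd b}$ in the now-commuting generators and using $f_0 = 0$, the relation collapses to additivity along $\im(1-T) = m\znz n$, that is $f_{c+d} = f_c + f_d$ for all $c \in \znz n$ and $d \in m\znz n$. Choosing orbit representatives $0,1,\dots,m-1$ and setting $h := f_m$, every generator becomes $f_a = f_{a \bmod m} + \lfloor a/m \rfloor\, h$, with the single torsion relation $(n/m)\,h = 0$; a short verification that no further relation is forced yields $F \cong \Z^{m-1} \oplus \znz{n/m}$, free on $f_1,\dots,f_{m-1}$ with torsion generator $h$.

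Finally I would compute $u$ on these generators. Writing $\mathbf e_i$ for the basis vectors of $\Z^m$ indexed by orbit classes, one gets $u(f_i) = (\mathbf e_i - \mathbf e_0, i)$ for $1 \le i \le m-1$ and $u(h) = (0, m)$, where $m$ has order $n/m$ in $\znz n$. The vectors $\mathbf e_i - \mathbf e_0$ are independent in $\Z^m$, so the free part maps injectively, and the order of $u(h)$ equals that of $h$, so the torsion part maps injectively; hence $u|_F$ is injective. Since $u(t) = (\mathbf e_0, 0)$ has infinite order and $u$ respects the two semidirect decompositions, a one-line kernel computation on a general element $t^k w \in \Z \ltimes F$ (the $\Z^m$-coordinate forcing $k = 0$ and then $u|_F$ forcing $w = 1$) gives $\Ker(u) = 1$, proving the theorem.
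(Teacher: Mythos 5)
Your approach is correct in substance and genuinely different from the paper's, although both proofs pivot on the same key fact: the bi-additivity of $\lambda$ from Section 3, combined with cyclicity of $\znz{n}$, gives $\lambda(a,b)=ab\,\lambda(1,1)=0$. The paper turns this into the rewriting relation \eqref{relationlinear} and proves injectivity by an explicit normal-form algorithm: words are regrouped into orbit blocks, negative exponents are eliminated using the centrality of the powers $e_x^{d(x)}$ (\cref{prop:centralitepuissance}), and every $g$ is reduced to $e_{m-1}^{\varepsilon_{m-1}}\cdots e_1^{\varepsilon_1}e_0^{\varepsilon_0-1}e_{(1-T)c}$, whose data is recovered from $(Ab(g),\omega(g))$. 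You instead turn $\lambda\equiv 0$ into commutativity of the $f_a=e_0^{-1}e_a$, decompose $\As(X)$ as $\Z\ltimes F$ with $F=\Ker(\varepsilon)=\langle f_a\rangle$, compute $F$, and evaluate $u$ on generators. Your route bypasses \cref{prop:centralitepuissance} entirely and delivers more than the theorem asks for: the isomorphism type of $\As(X)$ and of $\Ker(\varepsilon)$ (your $\Z^{m-1}\oplus\znz{n/m}$ agrees with the paper's pullback $\Z^{m-1}\times_{\znz{m}}\znz{n}$, which splits because $\Z^{m-1}$ is free), which the paper only obtains afterwards.

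Two points need attention. The serious one is the clause ``a short verification that no further relation is forced'': as written, this is exactly the claim that the subgroup $F\leq\As(X)$ is \emph{presented} by the rewritten braidings, and that does not come for free---a subgroup can satisfy relations that are not consequences of the ones you exhibit. The claim is true and provable (conjugation by $t=e_0$ permutes the generators $f_a$ and permutes the set of rewritten relators $f_{Ta}f_bf_{Ta+(1-T)b}^{-1}f_{Tb}^{-1}$, so Reidemeister--Schreier with transversal $\{t^k\}$, or a pair of mutually inverse homomorphisms between $\As(X)$ and $\Z\ltimes\widetilde{F}$ for the abstractly presented group $\widetilde{F}$, identifies $F$ with $\widetilde{F}$), but you must either include that argument or, better, reorder the proof so as not to need it: the relations you derived do hold in $\As(X)$ (namely $f_{x+d}=f_x+f_d$ for $d\in\im(1-T)$, hence $f_a=f_{a\bmod m}+\lfloor a/m\rfloor f_m$ and $(n/m)f_m=0$), and they already show that every element of $F$ equals $\sum_{i=1}^{m-1}c_if_i+cf_m$. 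Injectivity of $u|_F$ then follows from your coordinate computation alone, and $F\cong\Z^{m-1}\oplus\znz{n/m}$ becomes a corollary of injectivity rather than a prerequisite for it. The minor point: ``the free part injects and the torsion part injects, hence $u|_F$ injects'' is not a valid inference in general; here it is rescued by noting that in $u\bigl(\sum c_if_i+cf_m\bigr)=\bigl(\sum c_i(\mathbf{e}_i-\mathbf{e}_0),\,\sum c_i\,i+cm\bigr)$ the $\Z^m$-coordinate forces all $c_i=0$ first, after which the $\znz{n}$-coordinate forces $cm\equiv 0\pmod{n}$, i.e.\ $cf_m=0$.
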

Since we have a surjective homomorphism $\Z^m\to \Z$, the action of $\Z$ on $\znz{n}$ gives us an action of $\Z^m$ on $\znz{n}$:
\[
a \cdot [g]= a \cdot \varepsilon(g) = T^{\varepsilon(g)} a.
\]

In the proof, we wil need the following technical result:
\begin{lem}
For all $\alpha, \beta, \gamma \in X=\Al(\znz{n},T)$, we have
\begin{equation}\label{relationlinear}
e_\alpha e_\beta = e_{\alpha-(1-T)\gamma} e_{\beta + (1-T)T\gamma}
\end{equation}
\end{lem}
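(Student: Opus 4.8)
The plan is to prove the identity by explicit rewriting with the braiding relations $e_a e_b = e_b e_{a\lhd b}$, which are the only defining relations of $\As(X)$; since the claimed equation is between two positive words of the same length, any valid derivation consists of braidings together with insertions and later cancellations of auxiliary pairs $e_c e_c^{-1}$. A first sanity check is that both sides have the same image under $f=(\varepsilon,\omega)$: the degrees are both $2$, and by \eqref{eq:omega_1cocycle} one has $\omega(e_\alpha e_\beta)=T\alpha+\beta=\omega\big(e_{\alpha-(1-T)\gamma}e_{\beta+(1-T)T\gamma}\big)$, because $T(1-T)=(1-T)T$. Hence the difference $e_\alpha e_\beta\,\big(e_{\alpha-(1-T)\gamma}e_{\beta+(1-T)T\gamma}\big)^{-1}$ lies in $K(X)$; this is not conclusive on its own, since $K(X)$ is in general non-trivial, so the whole content is to show that this particular kernel element vanishes.

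The basic building block is a single braiding: for all $x,\gamma\in\znz n$,
\[
e_{x-(1-T)\gamma}\,e_{T\gamma}=e_{T\gamma}\,e_{Tx},
\]
since $T\big(x-(1-T)\gamma\big)+(1-T)(T\gamma)=Tx$. Taking $x=\alpha$ rewrites the first factor, and the mirror instance obtained by replacing $\gamma$ with $-T\gamma$, namely $e_{\beta+(1-T)T\gamma}\,e_{-T^{2}\gamma}=e_{-T^{2}\gamma}\,e_{T\beta}$, rewrites the second, giving a handle on both slots. A useful structural reduction is that the relations are additive in $\gamma$: writing $R_\gamma$ for the assertion that $e_\alpha e_\beta=e_{\alpha-(1-T)\gamma}e_{\beta+(1-T)T\gamma}$ holds for all $\alpha,\beta$, applying $R_{\gamma_1}$ to the output of $R_{\gamma_2}$ produces $R_{\gamma_1+\gamma_2}$, while $R_0$ is trivial; thus $\{\gamma:R_\gamma\}$ is a subgroup of $\znz n$, and since $\znz n=\langle 1\rangle$ it suffices to establish $R_1$.

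The main obstacle is that braidings keeping the word at length two do not suffice. The braiding move acts on pairs by $(\alpha,\beta)\mapsto(\beta,T\alpha+(1-T)\beta)$, and the target $(\alpha-(1-T)\gamma,\beta+(1-T)T\gamma)$ differs from $(\alpha,\beta)$ by an affine shift along the $(-T)$-eigendirection of this linear map, so it generally does not lie on the orbit; the extreme case is $\alpha=\beta$, a fixed point of the move, where $e_\alpha e_\alpha=e_{\alpha-(1-T)\gamma}e_{\alpha+(1-T)T\gamma}$ manifestly cannot be reached without enlarging the word. The derivation must therefore insert an auxiliary pair $e_c e_c^{-1}$ with $c$ in the orbit of $\alpha$, so that conjugation realizes the within-orbit displacement through \cref{lem:conjg}, $e_c^{-1}e_\alpha e_c=e_{\alpha\lhd c}$ together with $\alpha-(1-T)\gamma=\alpha\lhd(\alpha-\gamma)$, then transport the auxiliary generators with the building block above and cancel them at the end. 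Controlling this rewriting—so that the two conjugations required for the two slots combine rather than conflict, and so that the process terminates—is exactly where the finiteness of $\znz n$ (equivalently, that it is generated by $1$ as a $\Z[T]$-module) is used, since a single braiding only relates $R_\gamma$ to $R_{T\gamma}$ and is otherwise self-referential. As a cross-check and an alternative route, via the relation $e_a e_b=e_0 e_{Ta+b}\,\varphi((1,a),(1,b))$ and the centrality of $\varphi$, the statement $R_\gamma$ is equivalent to $\varphi((1,\alpha),(1,\beta))=\varphi\big((1,\alpha-(1-T)\gamma),(1,\beta+(1-T)T\gamma)\big)$, which one may instead deduce from the cocycle and equivariance properties collected in \cref{thm:propphi}.
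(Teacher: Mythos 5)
Your proposal assembles several correct observations---the weight check, the reduction of $R_\gamma$ to $R_1$ via the subgroup argument (a genuine use of the cyclicity of $\znz{n}$), the equivalence of $R_\gamma$ with the identity $\varphi((1,\alpha),(1,\beta))=\varphi\bigl((1,\alpha-(1-T)\gamma),(1,\beta+(1-T)T\gamma)\bigr)$, and the accurate diagnosis that length-two braidings alone cannot reach the target pair---but it never actually establishes the relation. At the point where the proof should occur, you only describe what a derivation ``must'' do (insert auxiliary pairs $e_c e_c^{-1}$, transport them with the building block, cancel them, with ``controlling this rewriting'' asserted to be where finiteness enters) without carrying out any of it; that unexecuted step is precisely the content of the lemma. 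The fallback route is also unsound as stated: the cocycle and equivariance properties of \cref{thm:propphi} hold for \emph{every} Alexander quandle $\Al(M,T)$, whereas the lemma fails in that generality. Indeed, for a connected $\Al(M,T)$ the relation \eqref{relationlinear} would let one shift the first argument of $\varphi_0$ to $0$, giving $\varphi_0\equiv 0$, hence $K(X)=0$ and $H_2^Q(X)=0$; this contradicts known computations for connected Alexander quandles of order $p^2$ \cite{iglesiasExplicitDescriptionSecond2017,clauwensAdjointGroupAlexander2010}. So no argument that only invokes \cref{thm:propphi} can close the gap; an input specific to $\znz{n}$ is indispensable, and your sketch never supplies one beyond the (insufficient) reduction to $R_1$.

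The missing ingredient, and the paper's actual mechanism, is the bilinear form $\lambda(x,y)=\varphi_0(y,x)-\varphi_0(x,y)$ studied in Section \ref{sec:3}. Its bi-additivity combined with cyclicity gives, in one line, $\lambda(a,b)=ab\,\lambda(1,1)=0$ for all $a,b\in\znz{n}$, hence $\varphi_0((1-T)a,b)=0$: this is exactly where linearity is used. Substituting $x=a$, $y=-(1-T)c$, $z=b+(1-T)c$ into the cocycle condition \eqref{conditioncocycle0} then kills the two terms whose argument lies in $\im(1-T)$ and yields
\[
\varphi_0(a,b)=\varphi_0\bigl(a-(1-T)c,\;b+(1-T)c\bigr),
\]
and unpacking $\varphi_0(a,b)=e_0^{-2}e_{T^{-1}a}e_b e_{a+b}^{-1}e_0$ on both sides, cancelling the common factors, and substituting $\alpha=T^{-1}a$, $\beta=b$, $\gamma=T^{-1}c$ gives \eqref{relationlinear}. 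If you want to repair your outline, replace the appeal to \cref{thm:propphi} by this $\lambda$-computation; your reduction to $R_1$ and the rewriting scaffolding then become unnecessary.
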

\begin{proof}
By using the bi-additivity of $\lambda$, we have for all $a,b\in X, \lambda(a,b) = ab\lambda(1,1) = 0$.
Then, $\varphi_0((1-T)a,b) = \lambda(a,b) = 0$.
Remember that the cocycle condition for $\varphi_0$ is
\[
\varphi_0(y,z) + \varphi_0(x,y+z) = \varphi_0(x+y,z) + \varphi_0(x,y).
\]
By replacing $x=a$, $y=-(1-T)c$ and $z=b+(1-T)c$, we get
\[
\underbrace{\varphi_0(-(1-T)c,b+(1-T)c)}_{=0} + \varphi_0(a,b) = \varphi_0(a-(1-T)c, b+(1-T)c) + \underbrace{\varphi_0(a,-(1-T)c)}_{=0}
\]
and hence
\[
\varphi_0(a,b) = \varphi_0(a-(1-T)c, b+(1-T)c).
\]
The expression of $\varphi_0(a,b)$ in terms of the generators of the structure group gives
\[
e_0^{-2} e_{T^{-1}a} e_b = e_0^{-2} e_{T^{-1}a - (1-T)T^{-1}c} e_{b+(1-T)c}.
\]
We finally get, for all $\alpha,\beta,\gamma\in X$,
\[
e_\alpha e_\beta = e_{\alpha-(1-T)\gamma} e_{\beta + (1-T)T\gamma}.\qedhere
\]
\end{proof}

\begin{proof}[Proof of Theorem \ref{thm:normalform}]
First, let us show that $u$ is well-defined.
Let $a,b\in X$.
On one hand, we have $u(e_a) u(e_b) =([e_a] + [e_b], Ta +b)$.
On the other hand, $u(e_b) u(e_{a\lhd b}) = ([e_b] + [e_{a\lhd b}],Ta +b)$.
But $a\lhd b$ and $a$ belong to the same orbit, and since $\Z^m$ is commutative, we have
\[
u(e_a) u(e_b) = u(e_b) u(e_{a\lhd b}).
\]

We fix the family $\{0,1, \ldots, m-1\}$ of representatives of the orbits of $X=\Al(\znz{n},T)$.

We will use the following result in the rewriting process:
\begin{prop}[\cite{lebedStructureGroupsSetTheoretic2019}] \label[prop]{prop:centralitepuissance}
For all $x\in \Al(\znz{n},T)$, there exists $d(x)\in \N$ such that $e_x^{d(x)}$ is central.
Moreover, if $x$ and $y$ belong to the same orbit, $d(x)=d(y)$ and $e_x^{d(x)} = e_y^{d(y)}$.
\end{prop}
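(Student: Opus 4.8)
The plan is to reduce the centrality of a power $e_x^d$ to an explicit condition on the right-translation action, which for a linear Alexander quandle collapses to a single condition on $T$ that is independent of $x$. Throughout I write $\tau_x\colon a \mapsto a\lhd x$ for the right translation by $x$, so that $a\cdot e_x = \tau_x(a)$ and $a\cdot e_x^d = \tau_x^d(a)$ for the action of $\As(X)$ on $X$.

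First I would characterize when $e_x^d$ is central. By \cref{lem:conjg}, for every generator $e_a$ we have $e_a e_x^d = e_x^d\, e_{a\cdot e_x^d}$, so $e_x^d$ commutes with $e_a$ if and only if $e_{a\cdot e_x^d} = e_a$, which by the injectivity of $e_{\text{\textunderscore}}$ (\cref{cor:injectivity}) is equivalent to $a\cdot e_x^d = a$. Since the $e_a$ generate $\As(X)$, this yields the clean equivalence
\[
e_x^d \text{ is central} \iff \tau_x^d = \id_X .
\]

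Second, I would make $\tau_x^d$ explicit. Applying the formula $a\cdot g = T^{\varepsilon(g)} a + (1-T)\omega(g)$ with $g=e_x^d$, where $\varepsilon(e_x^d)=d$ and $\omega(e_x^d) = (1+T+\cdots+T^{d-1})x$, gives $\tau_x^d(a) = T^d a + (1-T^d)x$. Hence $\tau_x^d = \id_X$ holds precisely when $(T^d-1)(a-x)=0$ for all $a$; as $a-x$ ranges over all of $\znz{n}$, this is exactly the condition $T^d = \id$ in $\Aut(\znz{n})$. Since $\Aut(\znz{n})$ is finite, $d:=\ord(T)$ is a well-defined positive integer for which $e_x^{d}$ is central, and the minimal such exponent does not depend on $x$; in particular $d(x)=d(y)=\ord(T)$ for any $x,y$, which already covers the first equality of the ``moreover'' part.

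Finally, for the equality $e_x^{d}=e_y^{d}$ when $x$ and $y$ lie in the same orbit, I would argue by conjugacy. By definition of the orbits there is $g\in\As(X)$ with $x\cdot g = y$, so \cref{lem:conjg} gives $e_x g = g\, e_{x\cdot g} = g\, e_y$, that is $e_y = g^{-1} e_x g$; raising to the $d$-th power yields $e_y^{d} = g^{-1} e_x^{d} g$, and since $e_x^{d}$ is central it is fixed by conjugation, whence $e_y^{d}=e_x^{d}$. The argument is essentially mechanical once the centrality condition is transported to the action; the only point requiring care is the reduction ``$e_x^d$ central $\iff \tau_x^d=\id_X$'', which uses both the conjugation relation and the injectivity of $x\mapsto e_x$, while the final conjugacy step works for any quandle, linearity intervening only to pin down the uniform exponent $\ord(T)$.
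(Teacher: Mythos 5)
Your proof is correct. There is, however, no internal proof to compare it with: the paper imports this proposition from \cite{lebedStructureGroupsSetTheoretic2019} without proof, so a self-contained argument is genuine added value here. Your argument rests only on facts the paper does establish: the equivalence ``$e_x^d$ is central $\iff \tau_x^d = \id_X$'' is exactly \cref{cor:centerofas} specialised to the element $e_x^d$ (your re-derivation via \cref{lem:conjg} and \cref{cor:injectivity} follows the same lines as the paper's proof of that corollary), and the identity $\tau_x^d(a) = T^d a + (1-T^d)x$ follows from the action formula $a\cdot g = T^{\varepsilon(g)}a + (1-T)\omega(g)$ of Section~\ref{sec:2} applied to $g = e_x^d$, using $\omega(e_x^d) = (1+T+\cdots+T^{d-1})x$. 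Beyond being correct, your proof yields a conclusion sharper than the one quoted: the single exponent $d=\ord(T)$ (finite because $\Aut(\znz{n})$ is finite) works for \emph{every} $x$ simultaneously, since the centrality condition $T^d=\id$ does not involve $x$ at all; the cited result is stated for general finite quandles, where the minimal exponent is $\ord(\tau_x)$ and can genuinely vary from orbit to orbit, so no such uniformity is available there. Your final conjugacy step ($x\cdot g = y$ gives $e_y = g^{-1}e_x g$, hence $e_y^d = e_x^d$ once $e_x^d$ is central) is valid in any quandle, so linearity enters exactly once, to pin down the exponent. As a side benefit, the uniformity $d(p)=\ord(T)$ for all orbit representatives $p$ would slightly simplify Steps (2) and (4) in the paper's proof of \cref{thm:normalform}, where the various $d(p)$ are carried along separately.
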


Now, take $g\in \As(X)$, and fix a word $\tilde{w} = e_{\tilde{a}_1}^{\tilde{\eta}_1} \ldots e_{\tilde{a}_k}^{\tilde{\eta}_k}$ representing $g$.
We can rewrite it into a new word $w_f = e_{m-1}^{\varepsilon_{m-1}} \ldots e_1^{\varepsilon_1} e_0^{\varepsilon_0-1} e_{(1-T)c}$ which also represents $g$, such that the rewriting path follows this order:
\begin{enumerate}
\item Using the braiding relations, we build blocks of generators whose colours belong to the same orbit:
\[
\tilde{w} \to \underbrace{e_{a_1}^{\eta_1} \ldots e_{a_i}^{\eta_i }}_{=\omega_{m-1}} \ldots \underbrace{e_{a_j}^{\eta_j} \ldots e_{a_k}^{\eta_k }}_{=\omega_0} = w.
\]
The numbers $\varepsilon_s:= \varepsilon(\omega_s)$ will be the powers involved in the writing of $w_f$.
\item We reduce to positive powers of generators.
Let $\xi_p\in \N$ be the smaller integer such that $\xi_p d(p) + \eta_p \geq 0$, where $d(p)$ is the smallest power such that $e_{p}^{d(p)}$ is central. 
We now use \cref{prop:centralitepuissance} to reduce to positive powers of generators:
\[
w \to e_0^{-d(0)[\xi_j + \ldots + \xi_k]} \ldots e_{m-1}^{-d(m-1)[\xi_1+ \ldots + \xi_i]} \underbrace{e_{a_1}^{\xi_1 d(a_1) + \eta_1} \ldots e_{a_k}^{\xi_k d(a_k) + \eta_k}}_{= \omega^+}
\]

\item We recursively apply the relation \eqref{relationlinear} on $w^+$ from left to right, which leads to 
\[
w^+ \to e_{m-1}^{\tilde{\varepsilon}_{m-1}} \ldots e_1^{\tilde{\varepsilon}_1} e_0^{\tilde{\varepsilon}_0-1} e_{(1-T)c} 
\]
with $c\in X$.
\item We conclude by using again \cref{prop:centralitepuissance}: we have $\tilde{\varepsilon_r} = \varepsilon_r + d(r)\sum\limits_{a_u\in r +(1-T)X} \xi_u$, and since $e_r^{d(r)}$ is central, we get 
\[
e_{m-1}^{\tilde{\varepsilon}_{m-1}} \ldots e_1^{\tilde{\varepsilon}_1} e_0^{\tilde{\varepsilon}_0-1} e_{(1-T)c}
\to
e_{m-1}^{d(m-1)[\xi_1+ \ldots + \xi_i]} \ldots e_0^{d(0)[\xi_j + \ldots + \xi_k]}  e_{m-1}^{\varepsilon_{m-1}} \ldots e_1^{\varepsilon_1} e_0^{\varepsilon_0-1} e_{(1-T)c}.
\]
It allows us to kill the central terms that we artificially added.
\end{enumerate}

We get the final form for $g\in \As(X)$:
\begin{equation} \label{formenormale}
g=e_{m-1}^{\varepsilon_{m-1}} \ldots e_{1}^{\varepsilon_1} e_0^{\varepsilon_0 -1} e_{(1-T)c},
\end{equation}
where $\varepsilon_{m-1},\ldots, \varepsilon_0$ and $(1-T)c$ are uniquely determined by $Ab(g)$ and $\omega(g)$.
Indeed, $(\varepsilon_{m-1},\ldots, \varepsilon_0)$ can be identified with $Ab(g)$ since each component is a partial degree of $g$.
By using the $1$-cocycle condition satisfied by $\omega$ (\ref{eq:omega_1cocycle}), we have
\[
\omega(g) = T\omega\left(e_{m-1}^{\varepsilon_{m-1}} \ldots e_1^{\varepsilon_1} e_0^{\varepsilon_0-1}\right) + (1-T)c
\]
Moreover, the expression (\ref{eq:omega_1cocycle}) shows that $(1-T)c$ depends only on $Ab(g)$ and $\omega(g)$.
Hence \eqref{formenormale} yields a normal form for $g$.

Hence, we have an injective group morphism
\[
\begin{array}{ccc}
\As(X) & \longrightarrow & \Z^m \ltimes M\\
g & \longmapsto & \left( (\varepsilon_{m-1}, \ldots, \varepsilon_1, \varepsilon_0), \omega(g) \right)
\end{array} \qedhere
\]
\end{proof}

We can give a nice expression of $\omega(g)$:

\begin{equation}\label{omega}
\omega(g) =  \underbrace{\frac{T^{\varepsilon_{m-1}} -1}{T-1} T^{\varepsilon_0+\ldots+ \varepsilon_{m-2}} (m-1) + \ldots + \frac{T^{\varepsilon_1} -1}{T-1}T^{\varepsilon_0} 1}_{b(g)} + (1-T)c,
\end{equation}
where $\frac{T^{k}-1}{T-1}$ denotes the evaluation of the polynomial $\frac{X^k-1}{X-1}$ at $X=T$.

Note that $(1-T)c$ is determined uniquely, whereas $c$ is not in the non-connected case.

\subsection{Second homology group}
The aim of this subsection is to compute the second homology group of a linear Alexander quandle.
To begin with, we can restrict the group morphism $u$ from \cref{thm:normalform} to $\Ker(\varepsilon)$.
\begin{prop}
The kernel $\Ker(\varepsilon)$ is an abelian subgroup of $\Z^m\ltimes \znz{n}$.
\end{prop}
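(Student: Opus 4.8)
The plan is to deduce this directly from the injectivity of $u$ proved in \cref{thm:normalform}. Since $u$ is an injective group morphism, $\Ker(\varepsilon)$ is carried isomorphically onto its image $u(\Ker(\varepsilon))$, which, being the image of a subgroup under a morphism, is automatically a subgroup of $\Z^m \ltimes \znz{n}$. The only substantive point is therefore commutativity, and for that it suffices to check that any two elements of the image commute.

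First I would make the multiplication of $\Z^m \ltimes \znz{n}$ explicit on the image of $u$. Writing $u(g) = (Ab(g), \omega(g))$ and recalling that the action of $\Z^m$ on $\znz{n}$ factors through the total degree, so that $Ab(h)$ acts on $\omega(g)$ by $T^{\varepsilon(h)}$, the $1$-cocycle condition \eqref{eq:omega_1cocycle} for $\omega$ yields
\[
u(g)\, u(h) = \left( Ab(g) + Ab(h),\ T^{\varepsilon(h)} \omega(g) + \omega(h) \right) = u(gh),
\]
consistently with the fact that $u$ is a morphism.

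The proof then concludes by specializing to $g, h \in \Ker(\varepsilon)$, for which $\varepsilon(g) = \varepsilon(h) = 0$, so that every twist $T^{\varepsilon(g)}$ and $T^{\varepsilon(h)}$ is the identity. Both products collapse to
\[
u(g)\, u(h) = \left( Ab(g) + Ab(h),\ \omega(g) + \omega(h) \right) = u(h)\, u(g),
\]
because $\Z^m$ is commutative and $\znz{n}$ is abelian. Hence $u(g)$ and $u(h)$ commute, and the injectivity of $u$ forces $gh = hg$.

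I expect no real obstacle here: all the difficulty was already absorbed into the normal form of \cref{thm:normalform}. The present statement is precisely the clean observation that elements of degree zero act trivially in the semidirect twist, so their images land in the part of $\Z^m \ltimes \znz{n}$ on which the action is trivial, which is abelian as a direct product of abelian groups. The single point to state carefully is that the $\Z^m$-action on $\znz{n}$ depends only on the total degree $\varepsilon$ and hence is trivial exactly on $\Ker(\varepsilon)$.
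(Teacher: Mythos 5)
Your proof is correct and is essentially the paper's own argument: both identify $\Ker(\varepsilon)$ with its image under the injective morphism $u$, compute the product in $\Z^m \ltimes \znz{n}$, and observe that the twist $T^{\varepsilon(h)}$ is trivial on degree-zero elements, so the two factors commute. The only difference is presentational — you spell out the identification and the triviality of the $\Z^m$-action on $\Ker(\varepsilon)$ a bit more explicitly, but the content is identical.
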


\begin{proof}
Let $g,h\in \Ker(\varepsilon)$.
By the injectivity of $u$, we can identify $g=(Ab(g), \omega(g))$ and $h=(Ab(h), \omega(h))$.
Then
\begin{align*}
gh & = (Ab(g), \omega(g))(Ab(h), \omega(h))\\
& = (Ab(g) + Ab(h), T^{\varepsilon(h)} \omega(g) + \omega(h))\\
& = (Ab(g) + Ab(h), \omega(g) + \omega(h))\\
& = hg.\qedhere
\end{align*}
\end{proof}

\begin{prop}
Let $X = \Al(\znz{n},T)$ be a linear Alexander quandle with $m$ orbits.
There exists a unique group morphism $\overline{\omega}$ that makes the following diagram commute:

\begin{center}
\begin{tikzcd}
\Ker(\varepsilon) \arrow[r,twoheadrightarrow, "\alpha"] \arrow[d, twoheadrightarrow, "\omega"] & \Z^{m-1} \arrow[d, dashed, "\overline{\omega}"] \\
\znz{n} \arrow[r, twoheadrightarrow, "p"] &\znz{m}
\end{tikzcd}
\end{center}
where $\alpha= Ab_{\vert \Ker(\varepsilon)}$ and $p$ is the projection on the set of orbits.
\end{prop}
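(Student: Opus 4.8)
The plan is to verify that $p\circ\omega$ factors through $\alpha$ and then invoke the universal property of the surjection $\alpha$. First I would observe that the restriction of $\omega$ to $\Ker(\varepsilon)$ is a genuine group morphism: for $g,h\in\Ker(\varepsilon)$ the $1$-cocycle condition \eqref{eq:omega_1cocycle} gives $\omega(gh)=T^{\varepsilon(h)}\omega(g)+\omega(h)=\omega(g)+\omega(h)$, because $\varepsilon(h)=0$. Hence $p\circ\omega\colon\Ker(\varepsilon)\to\znz{m}$ is a group morphism, as is $\alpha=Ab_{\vert\Ker(\varepsilon)}$, whose image is the rank-$(m-1)$ hyperplane $\{(\varepsilon_0,\dots,\varepsilon_{m-1})\in\Z^m:\sum_i\varepsilon_i=0\}\cong\Z^{m-1}$, onto which $\alpha$ surjects (since $Ab$ is onto and $\varepsilon$ is the sum of the partial degrees). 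Because $\alpha$ is surjective, uniqueness of $\overline{\omega}$ is automatic, so the whole statement reduces to proving the inclusion $\Ker(\alpha)\subseteq\Ker(p\circ\omega)$.

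The key step is to show that every $g\in\Ker(\varepsilon)$ with $Ab(g)=0$ satisfies $p(\omega(g))=0$. For this I would feed $g$ into the normal form \eqref{formenormale}, namely $g=e_{m-1}^{\varepsilon_{m-1}}\cdots e_1^{\varepsilon_1}e_0^{\varepsilon_0-1}e_{(1-T)c}$, and use that the tuple $(\varepsilon_0,\dots,\varepsilon_{m-1})$ is precisely $Ab(g)$, each coordinate being a partial degree. Thus $Ab(g)=0$ forces $\varepsilon_0=\dots=\varepsilon_{m-1}=0$, so $g=e_0^{-1}e_{(1-T)c}$; equivalently, in \eqref{omega} every summand of $b(g)$ carries a factor $\frac{T^{\varepsilon_i}-1}{T-1}=0$, so that $b(g)=0$ and $\omega(g)=(1-T)c$.

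It then suffices to recognise $(1-T)c$ as an element of $\Ker(p)$. As the orbits of $X$ are the cosets of $\im(1-T)$ and there are $m$ of them, we have $\im(1-T)=m\znz{n}$, which is exactly the kernel of the orbit projection $p\colon\znz{n}\to\znz{m}$. Therefore $\omega(g)=(1-T)c\in\Ker(p)$ and $p(\omega(g))=0$, proving $\Ker(\alpha)\subseteq\Ker(p\circ\omega)$. The universal property of the surjective morphism $\alpha$ then yields a unique group morphism $\overline{\omega}\colon\Z^{m-1}\to\znz{m}$ with $\overline{\omega}\circ\alpha=p\circ\omega$, i.e.\ making the square commute.

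I do not expect a serious obstacle here, since the substantive work is already encoded in the normal form \eqref{formenormale} and the weight formula \eqref{omega}. The only points demanding a little care are confirming that $\alpha$ surjects onto the full rank-$(m-1)$ hyperplane rather than some proper subgroup, and correctly matching $\Ker(p)$ with $\im(1-T)=m\znz{n}$; both rest on the earlier identification of the orbits with the cosets of $\im(1-T)$.
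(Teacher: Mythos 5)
Your proof is correct and follows essentially the same route as the paper: both arguments rest on the normal form \eqref{formenormale} and the weight formula \eqref{omega}, which show that $\omega(g)$ modulo $\im(1-T)=m\znz{n}$ is determined by $Ab(g)$ alone. The only difference is packaging --- you phrase this as the kernel inclusion $\Ker(\alpha)\subseteq\Ker(p\circ\omega)$ and invoke the universal property of the surjection $\alpha$, whereas the paper defines $\overline{\omega}$ directly on $\alpha$-preimages and checks well-definedness; these are the same argument in two equivalent forms.
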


\begin{proof}
We check that $\alpha$ and $\omega_{\vert \Ker(\varepsilon)}$ are surjective.
For $(\varepsilon_{m-1}, \ldots, \varepsilon_1)\in \Z^{m-1}$ we consider $h:= e_{m-1}^{\varepsilon_{m-1}} \ldots e_1^{\varepsilon_1} e_0^{-\sum\limits_{i=1}^{m-1} \varepsilon_i} \in \Ker(\varepsilon)$. Then, $\alpha(h) = (\varepsilon_{m-1}, \ldots, \varepsilon_1)$, so $\alpha$ is surjective.
Now, for every $x\in X$, we have $\omega(e_0^{-1} e_x) = x$, so $\omega_{\vert \Ker(\varepsilon)}$ is surjective.

Let $g=\left( (\varepsilon_{m-1}, \ldots, \varepsilon_0), \omega_g\right)\in\Ker(\varepsilon)$.
We have $\varepsilon_0= -\sum\limits_{i=1}^{m-1} \varepsilon_i$, so we can identify $\alpha(g)$ with $(\varepsilon_{m-1}, \ldots, \varepsilon_1)$.

Let $c\in \Z^{m-1}$.
Take $\widehat{c}$ an $\alpha$-preimage of $c$, and define $\overline{\omega}(c):= p( \omega(\widehat{c}) )$.
The weights of two preimages of $c$ differ only by an element $(1-T)X$.
Recall that
\[
b(g)=\frac{T^{\varepsilon_{m-1}} -1}{T-1} T^{\varepsilon_0+\ldots+ \varepsilon_{m-2}} (m-1) + \ldots + \frac{T^{\varepsilon_1} -1}{T-1}T^{\varepsilon_0} 1\]
depends only on $\alpha(g)$, and $\omega(g)=b(g) + (1-T)c$ for an element $c\in X$.
We have $p((1-T)c) = 0$ so $\overline{\omega}$ does not depend on the choice of the preimage.
Moreover, $\overline{\omega}$ is a well-defined group morphism since $\znz{m}$ is abelian.
It is surjective, as $c$ is sent to the representative of the corresponding orbit, which are viewed as the non-trivial elements of $\znz{m}$.
The surjectivity of $\alpha$ gives the commutativity of the square, and the uniqueness of the completion.
\end{proof}

\begin{cor}
For an Alexander quandle on $\znz{n}$ with $m$ orbits, the kernel of the degree morphism $\Ker(\varepsilon)$ is the pullback
\[
\Ker(\varepsilon) \cong \Z^{m-1} \underset{\overline{\omega},\znz{m}, p}{\times} \znz{n}
\]
\end{cor}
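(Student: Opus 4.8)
The plan is to exhibit the canonical comparison map into the pullback and prove it is an isomorphism. Recall that the fiber product $\Z^{m-1} \underset{\overline{\omega},\znz{m}, p}{\times} \znz{n}$ is the subgroup $\{(x,y)\in \Z^{m-1}\times\znz{n} : \overline{\omega}(x) = p(y)\}$ of $\Z^{m-1}\times\znz{n}$. I would define
\[
\Phi\colon \Ker(\varepsilon)\longrightarrow \Z^{m-1}\times\znz{n},\qquad g\longmapsto (\alpha(g),\omega(g)).
\]
Since $\alpha = Ab_{\vert\Ker(\varepsilon)}$ is a group morphism and $\omega_{\vert\Ker(\varepsilon)}$ is one too (the $1$-cocycle relation \eqref{eq:omega_1cocycle} degenerates to additivity on $\Ker(\varepsilon)$), and since $\Ker(\varepsilon)$ is abelian, $\Phi$ is a group morphism. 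The commutativity of the square from the previous proposition, namely $p\circ\omega = \overline{\omega}\circ\alpha$, guarantees that the image of $\Phi$ lands inside the pullback. It then suffices to check that $\Phi$ is bijective.

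For injectivity, suppose $\Phi(g)=0$, i.e. $\alpha(g)=0$ and $\omega(g)=0$. Writing $Ab(g)=(\varepsilon_{m-1},\ldots,\varepsilon_0)$, the condition $g\in\Ker(\varepsilon)$ forces $\varepsilon_0 = -\sum_{i=1}^{m-1}\varepsilon_i$, while $\alpha(g)=(\varepsilon_{m-1},\ldots,\varepsilon_1)=0$; hence $Ab(g)=0$ and $u(g)=(Ab(g),\omega(g))=(0,0)$. The injectivity of $u$ established in \cref{thm:normalform} then yields $g=1$.

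For surjectivity, which I expect to be the crux, I would fix $(x,y)$ in the pullback and first lift $x$: since $\alpha$ is surjective, choose $\widehat{x}\in\Ker(\varepsilon)$ with $\alpha(\widehat{x})=x$. The compatibility $p(\omega(\widehat{x}))=\overline{\omega}(\alpha(\widehat{x}))=\overline{\omega}(x)=p(y)$ shows that the discrepancy $y-\omega(\widehat{x})$ lies in $\Ker(p)$, and $\Ker(p)=\im(1-T)$ because $p$ is the projection modulo $\im(1-T)=m\znz{n}$. The key step is then to correct the weight without disturbing $\alpha$: for any $z\in X$, the element $h:=e_0^{-1}e_{(1-T)z}$ satisfies $\varepsilon(h)=0$ and $\omega(h)=(1-T)z$, and moreover $Ab(h)=[e_{(1-T)z}]-[e_0]=0$ — hence $\alpha(h)=0$ — because $(1-T)z$ lies in the orbit of $0$. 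Choosing $z$ with $(1-T)z=y-\omega(\widehat{x})$ and setting $g:=\widehat{x}\,h$, additivity gives $\alpha(g)=x$ and $\omega(g)=y$, so $\Phi(g)=(x,y)$.

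The main obstacle is precisely this surjectivity, and within it the assertion that $\omega$ restricted to $\Ker(\alpha)\cap\Ker(\varepsilon)$ surjects onto $\Ker(p)$; everything reduces to the identification $\Ker(p)=\im(1-T)$ together with the explicit correcting elements $e_0^{-1}e_{(1-T)z}$, whose abelianized image is trivial precisely because their colours lie in the zero orbit. Once these are in place, $\Phi$ is a bijective group morphism, hence the desired isomorphism $\Ker(\varepsilon)\cong \Z^{m-1}\underset{\overline{\omega},\znz{m},p}{\times}\znz{n}$.
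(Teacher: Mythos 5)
Your proof is correct and is essentially the argument the paper leaves implicit after the preceding proposition: you build the canonical comparison map $g\mapsto(\alpha(g),\omega(g))$, get injectivity from the injectivity of $u$ in \cref{thm:normalform}, and get surjectivity from the surjectivity of $\alpha$ plus weight-correcting elements with trivial abelianization --- your correctors $e_0^{-1}e_{(1-T)z}$ are exactly the normal form \eqref{formenormale} with all $\varepsilon_i=0$. No gaps to report.
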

The above pullback will be denoted $\Z^{m-1} \underset{\znz{m}}{\times} \znz{n}$ for brevity.

We get a nice expression for the second homology group of a finite linear Alexander quandle.
\begin{cor}\label{cor:H2}
Let $X=\Al(\znz{n},T)$ be a linear Alexander quandle with $m$ orbits.
The second quandle homology group of $X$ is given by
\[
H_2^Q(X) \cong \left[ \Z^{m-1} \underset{\znz{m}}{\times} \Ker(1-T) \right]^m.
\]
\end{cor}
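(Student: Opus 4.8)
The plan is to combine three results already in hand. The reduction of Eisermann's formula established just after \cref{stabker} gives
\[
H_2^Q(X) \cong \left(\Stab_{\As(X)}(0)\cap \Ker(\varepsilon)\right)^m,
\]
so it is enough to compute the single factor $\Stab_{\As(X)}(0)\cap \Ker(\varepsilon)$ and show it is isomorphic to $\Z^{m-1} \underset{\znz{m}}{\times} \Ker(1-T)$. By \cref{stabker} this factor equals $\omega^{-1}(\Ker(1-T))\cap \Ker(\varepsilon)$; it is abelian, so the $m$-fold power above is a genuine direct sum and no noncommutativity intervenes.

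First I would transport the problem to the pullback model of $\Ker(\varepsilon)$. By the corollary immediately preceding this statement, the injective morphism $u$ of \cref{thm:normalform} identifies $\Ker(\varepsilon)$ with the pullback $\Z^{m-1} \underset{\overline{\omega},\znz{m},p}{\times} \znz{n}$, that is, with the pairs $(c,w)\in \Z^{m-1}\times\znz{n}$ satisfying $\overline{\omega}(c)=p(w)$, the identification being $g\mapsto(\alpha(g),\omega(g))$. Under this identification the weight map $\omega$ becomes the second projection $(c,w)\mapsto w$, so the subgroup $\omega^{-1}(\Ker(1-T))$ corresponds precisely to the pairs whose second coordinate lies in $\Ker(1-T)$.

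The key step is then the identification
\[
\omega^{-1}(\Ker(1-T))\cap \Ker(\varepsilon) \cong \left\{(c,w)\in \Z^{m-1}\times\Ker(1-T) : \overline{\omega}(c)=p(w)\right\} = \Z^{m-1} \underset{\znz{m}}{\times} \Ker(1-T),
\]
where the right-hand pullback is taken along $\overline{\omega}$ and the restriction $p\vert_{\Ker(1-T)}$. Feeding this factor back into the reduction of Eisermann's formula yields the stated isomorphism. The one point that needs genuine (though light) care — the main obstacle, such as it is — is that cutting the second coordinate down to $\Ker(1-T)$ really does commute with forming the pullback: the defining equation $\overline{\omega}(c)=p(w)$ is literally the same relation whether $w$ ranges over $\znz{n}$ or over the subgroup $\Ker(1-T)$, since $\overline{\omega}$ is untouched and $p\vert_{\Ker(1-T)}$ is simply $p$ with restricted domain. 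Granting this, the remaining content is a routine translation through the isomorphism $u$.
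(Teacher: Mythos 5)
Your proposal is correct and follows essentially the same route the paper intends (the paper leaves this corollary's proof implicit): combine the reduction of Eisermann's formula $H_2^Q(X)\cong\left(\Stab_{\As(X)}(0)\cap\Ker(\varepsilon)\right)^m$ with \cref{stabker} to replace the factor by $\omega^{-1}(\Ker(1-T))\cap\Ker(\varepsilon)$, then transport through the pullback identification of $\Ker(\varepsilon)$, under which $\omega$ is the second projection, so restricting the second coordinate to $\Ker(1-T)$ yields $\Z^{m-1}\underset{\znz{m}}{\times}\Ker(1-T)$. Your explicit remark that cutting down the second coordinate commutes with forming the pullback is exactly the (light) verification needed, so nothing is missing.
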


We can give an explicit description of the pullback in terms of cyclic groups.
\begin{thm}
With the same notation as \cref{cor:H2}, we have
\[
H_2^Q(X) \cong \Z^{(m-1)m} \oplus \left[\gcd\left(m,\frac{n}{m}\right)\znz{n} \right] ^m.
\]
\end{thm}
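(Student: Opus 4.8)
The plan is to pin down the isomorphism type of the single pullback group
\[
P := \Z^{m-1} \underset{\znz m}{\times} \Ker(1-T)
\]
coming from \cref{cor:H2}; once $P$ is identified, the theorem follows by raising to the $m$-th power, which splits into $\Z^{(m-1)m}$ for the free part and $m$ copies of the torsion of $P$ for the torsion part.

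First I would make the two legs of the pullback explicit. The leg $\overline\omega\colon\Z^{m-1}\to\znz m$ is surjective by the proposition constructing $\overline\omega$, and the second leg is the restriction to $\Ker(1-T)$ of the orbit projection $p\colon\znz n\to\znz n/m\znz n\cong\znz m$, i.e.\ reduction modulo $m$. Here $\Ker(1-T)$ is cyclic of order $m$: since $\im(1-T)=m\znz n$ has order $n/m$, the first isomorphism theorem gives $|\Ker(1-T)|=n/(n/m)=m$, whence $\Ker(1-T)=(n/m)\znz n$. Taking $n/m$ as generator, the map $p|_{\Ker(1-T)}$ becomes multiplication by $n/m$ on $\znz m$, so its kernel is cyclic of order $\gcd(m,n/m)$ and its image has index $\gcd(m,n/m)$ in $\znz m$.

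The core of the argument is then the short exact sequence produced by the projection $\pi\colon P\to\Z^{m-1}$ onto the first factor. Its kernel is $\{0\}\times\Ker(p|_{\Ker(1-T)})$, the cyclic group of order $\gcd(m,n/m)$ computed above, which is the torsion factor $\gcd(m,n/m)\znz n$ of the statement; its image is $\overline\omega^{-1}(\im(p|_{\Ker(1-T)}))$, a finite-index subgroup of $\Z^{m-1}$ and hence free of rank $m-1$. This gives
\[
0\longrightarrow\gcd\!\left(m,\tfrac nm\right)\znz n\longrightarrow P\overset{\pi}{\longrightarrow}\Z^{m-1}\longrightarrow 0,
\]
which splits because the quotient $\Z^{m-1}$ is free. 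Thus $P\cong\Z^{m-1}\oplus\gcd(m,n/m)\znz n$, and raising to the $m$-th power yields precisely the stated decomposition of $H_2^Q(X)$.

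The only genuine obstacle is the explicit description of $p|_{\Ker(1-T)}$: one must simultaneously realise $\Ker(1-T)$ as the order-$m$ subgroup $(n/m)\znz n$ and $p$ as reduction modulo $m\znz n$, since it is exactly the interaction of the generator $n/m$ with reduction mod $m$ that produces the invariant $\gcd(m,n/m)$. Everything after this identification—the splitting of the sequence and the passage to the $m$-th power—is routine bookkeeping with finitely generated abelian groups.
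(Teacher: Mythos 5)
Your proof is correct, and it rests on the same arithmetic core as the paper's: identifying $\Ker(1-T)$ as the order-$m$ subgroup $\frac{n}{m}\znz{n}$ and computing its interaction with $\im(1-T)=m\znz{n}$, which is what produces the invariant $\gcd\left(m,\frac{n}{m}\right)$. The execution, however, is genuinely different. The paper proceeds by explicit coordinate manipulation: it splits off $\Z^{m-2}$ via the change of variables $\varepsilon_1'=\sum\varepsilon_i\cdot i$, reduces to the rank-one pullback $\Z\underset{\znz{m}}{\times}\Ker(1-T)$, and then rewrites that set as $\Z\oplus\left(\frac{n}{m}\znz{n}\cap m\znz{n}\right)$. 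You instead treat the whole pullback $P$ uniformly: project onto $\Z^{m-1}$, identify the kernel as $\Ker\bigl(p|_{\Ker(1-T)}\bigr)$, cyclic of order $\gcd\left(m,\frac{n}{m}\right)$, identify the image as a finite-index (hence free, rank $m-1$) subgroup of $\Z^{m-1}$, and split the resulting short exact sequence because the quotient is free. Your route buys rigor at precisely the point where the paper is most terse: the paper's passage from $\{(\alpha,a)\mid \alpha\equiv a \pmod{m\Z}\}$ to $\{(\alpha,a)\mid a\in m\znz{n}\}$ looks like a change of variables, but when $\gcd\left(m,\frac{n}{m}\right)>1$ the first-coordinate projection of the pullback is not surjective (its image is $\gcd\left(m,\frac{n}{m}\right)\Z$), so this step is really the splitting argument that you carry out explicitly. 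One caveat, which applies equally to the statement and to your identification of the torsion factor: the group obtained is the cyclic group of order $\gcd\left(m,\frac{n}{m}\right)$, i.e.\ the subgroup $\frac{n}{\gcd(m,n/m)}\znz{n}$, or equivalently the quotient $\znz{n}\big/\gcd\left(m,\frac{n}{m}\right)\znz{n}$; the literal subgroup $\gcd\left(m,\frac{n}{m}\right)\znz{n}\subseteq\znz{n}$ has order $n/\gcd\left(m,\frac{n}{m}\right)$, which is different in general. The paper's own examples (e.g.\ $H_2^Q(X)=0$ for $n=pq$, $m=p$, where the literal subgroup would be all of $\znz{pq}$) confirm that the intended reading is the one you use, so this is a notational defect of the statement rather than a gap in your argument.
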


\begin{proof}
We denote $\im(1-T)$ by $m\znz{n}$.
\begin{align*}
\Z^{m-1} \underset{\znz{m}}{\times} \Ker(1-T) & \cong
\left\{ ((\varepsilon_{m-1}, \ldots, \varepsilon_0), a) \mid
\begin{array}{lcl}
\varepsilon_i\in \Z, a\in \Ker(1-T), \sum \varepsilon_i=0,\\
\sum \varepsilon_i\cdot i \equiv a \pmod{ \im(1-T) }
\end{array}
\right\} & \\
& \cong 
\left\{ ((\varepsilon_{m-1}, \ldots,\varepsilon_2, \varepsilon_1'), a) \mid
\begin{array}{lcl}
\varepsilon_i\in \Z, a\in \Ker(1-T),\\
\varepsilon_1' \equiv a \pmod{ \im(1-T) }
\end{array}
\right\} \\
&\cong \Z^{m-2} \oplus \left(\Z \underset{\znz{m}}{\times} \Ker(1-T) \right)\\
&\cong \Z^{m-2} \oplus \left(\Z \underset{\znz{m}}{\times} \frac{n}{m}\znz{n} \right)\\
&\cong \Z^{m-2} \oplus \left\{(\alpha, a) \mid \alpha\in \Z, a\in \frac{n}{m}\znz{n}, \alpha\equiv a \pmod {m\Z} \right\} \\
&\cong \Z^{m-2} \oplus \left\{ (\alpha, a) \mid \alpha \in \Z, a\in \frac{n}{m}\znz{n}, a\in m\znz{n} \right\}\\
&\cong \Z^{m-1} \oplus \left( \frac{n}{m}\znz{n}\cap m\znz{n} \right) \\
&\cong \Z^{m-1} \oplus \left(\gcd\left(m,\frac{n}{m}\right)\znz{n} \right),
\end{align*}
where $\varepsilon_1'=\sum \varepsilon_i\cdot i$.
\end{proof}

\begin{ex}
~
\begin{enumerate}
\item Suppose that $n=p$ is prime.
Then $1-T$ is invertible and hence $H_2^Q(X) = \{0 \}$.
\item Suppose that $n=pq$ is a product of two distinct primes.
If $1-T$ is not invertible, then $m$ is either $p$ or $q$.
If $m=p$, we have $\frac{n}{m} = q$, so $H_2^Q(X) = \{0\}$.
\item Suppose that $n=p^2$ is the square of a prime.
If $1-T$ is not invertible, then $m =p$ hence $H_2^Q(X) \cong \Z^{(p-1)p} \oplus \left( \znz{p}\right)^{p}$.
\end{enumerate}
\end{ex}

\bibliographystyle{alphaurl}
\bibliography{article1_final}
\end{document}